\documentclass{article}

\usepackage{amssymb,amsmath,color}
\usepackage{amsthm}
\usepackage{url}

\definecolor{red}{rgb}{1,0,0}
\definecolor{blue}{rgb}{.2,.2,.8}

\newtheorem{theorem}{Theorem}[section]
\newtheorem{lemma}{Lemma}
\newtheorem{corollary}[theorem]{Corollary}

\theoremstyle{remark}
\newtheorem*{remark}{Remark}

\begin{document}

\title{Dyadic Structure of the Two-Block Odd Partition Function and Theta-Weighted Congruences for
	\(\operatorname{pod}(n)\)}
\author{Mircea Merca\\
	\footnotesize Department of Mathematical Methods and Models,\\ 
	\footnotesize Fundamental Sciences Applied in Engineering Research Center,\\ 
	\footnotesize National University of Science and Technology Politehnica Bucharest,\\
	\footnotesize RO-060042, Bucharest, Romania\\
	\footnotesize Academy of Romanian Scientists, RO-050044, Bucharest, Romania\\
	\footnotesize mircea.merca@upb.ro
}
\date{}
\maketitle

\begin{abstract}
	The two-block odd partition function \(a(n)\) is the signed
enumeration of partitions into exactly two distinct part sizes, each
occurring an odd number of times. By separating the underlying
representations according to the \(2\)-adic valuations of the two part
sizes, we obtain a signed decomposition, completed by two classical
theta-function evaluations, which explains the structural relation
\(
a(2^km)
=
a(m)+(2^{k-1}-1)\sigma(m),
\)
where \(m\) is odd, \(k\ge1\), and
\(\sigma(m)\) is the sum-of-divisors function.
Combined with a previously established arithmetic formula for \(a(n)\),
this relation yields congruences along the dyadic progressions
\(2^k(4n+3)\), as well as the families
\(
a\!\left(2^\alpha3^\beta(12n+11)\right)\equiv0\pmod3
\)
and
\(
a\!\left(2^\alpha(18n+15)\right)\equiv0\pmod3.
\)
We also use a factorization of the generating function of \(a(n)\)
involving \(\operatorname{pod}(n)\), the number of partitions in which
odd parts are distinct and even parts are unrestricted, to establish a
uniform reduction modulo every odd prime of the associated auxiliary
series. The resulting theta-weighted sums involving
\(\operatorname{pod}(n)\) are congruent to \(a(n)\) and therefore admit
explicit evaluations involving the sum-of-divisors function and the
character divisor sum associated with the nonprincipal Dirichlet
character modulo \(4\). As special cases, we obtain infinite families
of weighted congruences modulo \(3\), \(5\), and \(13\).
\\
\\
{\bf Keywords:} signed partitions, divisor-sum function, partition congruences,
\(\operatorname{pod}\) function, theta series
\\
\\
{\bf MSC 2010:}  11P81, 11A25, 05A17, 11P83
\end{abstract}

\section{Introduction}\label{S1}

Arithmetic properties of partition functions are frequently reflected
in congruences, divisor-sum formulas, and theta-series identities.
In particular, explicit connections between divisor functions and
partition congruences have proved useful in revealing arithmetic
structures that are not apparent from the defining generating
functions alone; see, for example, \cite{Merca24,Merca26}.

Recently, the author introduced in \cite{Merca} the two-block odd
partition function \(a(n)\), defined by
\[
A(q):=\sum_{n\ge1}a(n)q^n
=
\sum_{1\le n_1<n_2}
\frac{q^{n_1+n_2}}
{(1+q^{2n_1})(1+q^{2n_2})}.
\]
Its coefficients count, with signs, partitions into exactly two
distinct part sizes, each occurring an odd number of times.

One of the main results of \cite{Merca} is the explicit arithmetic
formula
\begin{align}
a(n)
=
\frac{\sigma(n)-\lambda(n)}4
-\frac{\sigma(n/2)}2
+\sigma(n/4),                                      \label{eq1}
\end{align}
where
\[
\sigma(n):=\sum_{d\mid n}d,
\qquad
\lambda(n):=\sum_{d\mid n}\chi_4(d),
\]
and
\[
\chi_4(n):=
\begin{cases}
1, & n\equiv1\pmod4,\\
-1, & n\equiv3\pmod4,\\
0, & n\equiv0,2\pmod4.
\end{cases}
\]
As usual, we set
\(
\sigma(x)=0
\)
whenever \(x\) is not a positive integer.

Formula~\eqref{eq1} gives a complete arithmetic description of
\(a(n)\), but it does not explain how the underlying signed
representations change when the argument is multiplied by a power of
\(2\). Our first objective is to provide such an explanation.

By separating the signed representations according to whether the two
distinct part sizes have equal or different \(2\)-adic valuations, we
obtain the following structural relation.

\begin{theorem}\label{thm:structural-relation}
	Let \(m\) be an odd positive integer and let \(k\ge1\). Then
	\[
	a(2^km)
	=
	a(m)+(2^{k-1}-1)\sigma(m).
	\]
\end{theorem}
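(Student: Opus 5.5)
The quickest route to Theorem~\ref{thm:structural-relation} is to substitute into formula~\eqref{eq1} and compare the two sides. The paper announces a combinatorial explanation via $2$-adic valuations, but as a plan I would first secure the identity arithmetically and then indicate how the signed decomposition matches it term by term.

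\textbf{Step 1: the odd argument.} Since \(m\) is odd, \(\sigma(m/2)=\sigma(m/4)=0\). Formula~\eqref{eq1} therefore gives
\[
a(m)=\frac{\sigma(m)-\lambda(m)}{4}.
\]

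\textbf{Step 2: the argument \(n=2^km\).} Multiplicativity gives \(\sigma(2^jm)=(2^{j+1}-1)\sigma(m)\) for \(j\ge0\). The character \(\chi_4\) vanishes on even integers, so only the odd divisors of \(2^km\) contribute to \(\lambda\), and these are exactly the divisors of \(m\). Hence \(\lambda(2^km)=\lambda(m)\).

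\textbf{Step 3: substitute, case \(k\ge2\).} All three \(\sigma\)-terms are defined, and
\[
a(2^km)=\frac{(2^{k+1}-1)\sigma(m)-\lambda(m)}{4}-\frac{(2^{k}-1)\sigma(m)}{2}+(2^{k-1}-1)\sigma(m).
\]
The coefficient of \(\sigma(m)\) is
\[
\frac{2^{k+1}-1}{4}-\frac{2^{k+1}-2}{4}+\frac{2^{k+1}-4}{4}=\frac{2^{k+1}-3}{4}.
\]
It remains to compare this with \(a(m)+(2^{k-1}-1)\sigma(m)\), whose \(\sigma(m)\)-coefficient is
\[
\frac14+2^{k-1}-1=\frac{2^{k+1}-3}{4}.
\]
The two agree, and the \(\lambda(m)\) terms are identical on both sides.

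\textbf{Step 4: substitute, case \(k=1\).} Here \(\sigma(2m/4)=0\), so
\[
a(2m)=\frac{3\sigma(m)-\lambda(m)}{4}-\frac{\sigma(m)}{2}=\frac{\sigma(m)-\lambda(m)}{4}=a(m).
\]
This matches the statement, since \(2^{0}-1=0\).

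\textbf{Combinatorial route.} The paper announces a structural explanation, and I would organize it as follows. Expand each factor as
\[
\frac{q^{n}}{1+q^{2n}}=\sum_{r\ge0}(-1)^rq^{(2r+1)n},
\]
so that \(a(N)\) is a signed count of representations \(N=(2r_1+1)n_1+(2r_2+1)n_2\) with \(n_1<n_2\). Write \(n_i=2^{v_i}u_i\) with \(u_i\) odd, and split the representations of \(N=2^km\) into two classes:
\begin{itemize}
\item those with \(v_1\ne v_2\): the \(2\)-adic valuation of the sum forces \(\min(v_1,v_2)=k\), which is impossible with the other term also divisible by \(2^{k}\) unless the larger-valuation term is absorbed appropriately;
\item those with \(v_1=v_2=v\le k-1\).
\end{itemize}
In the equal-valuation class, dividing by \(2^v\) maps the representations bijectively onto representations of \(2^{k-v}m\) by two odd parts. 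This gives the reduction to \(a(m)\) plus correction sums.

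\textbf{Main obstacle.} The hard part is evaluating these correction sums, which are signed sums over single part sizes. I expect to need the classical theta evaluations mentioned in the abstract: the Lambert series \(\sum_n q^n/(1+q^{2n})\) (Jacobi's two-squares generating function, which produces \(\lambda\)) and an analogous Eisenstein-type series producing \(\sigma\). Showing that the contributions over \(v=0,\dots,k-1\) telescope to \((2^{k-1}-1)\sigma(m)\) is the delicate bookkeeping.

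Since formula~\eqref{eq1} is available, Steps 1--4 already constitute a complete proof, and the combinatorial decomposition is only interpretive.
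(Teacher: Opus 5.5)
Your Steps 1--4 are correct and form a complete proof once formula~\eqref{eq1} is granted. The $\sigma(m)$-coefficients $\frac{2^{k+1}-1}{4}-\frac{2^k-1}{2}+(2^{k-1}-1)$ and $\frac14+2^{k-1}-1$ both equal $\frac{2^{k+1}-3}{4}$. The case $k=1$ is handled correctly, and $\lambda(2^km)=\lambda(m)$ is Lemma~\ref{lem:lambda-properties}(ii).

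The paper, however, mentions this substitution only as a remark after Corollary~\ref{Th1.1}. Its actual proof is the signed decomposition, which never invokes \eqref{eq1}:
\begin{itemize}
\item Representations of $2^km$ with $v_2(x)\ne v_2(y)$ have $\min(v_2(x),v_2(y))=k$. Dividing both parts by $2^k$ is a weight-preserving bijection onto $\mathcal A_m$, and this class produces $a(m)$.
\item Representations with common valuation $t\le k-1$ correspond to $\mathcal A'_{2^{k-t}m}$. Their signed count is $C_L(m)=\tfrac12[q^{2^Lm}]\bigl(F(q)^2-D(q)\bigr)$, computed from $F(q)^2=q^2\psi(q^4)^4=\sum_{n\ge0}\sigma(2n+1)q^{4n+2}$ and the diagonal series $D(q)$. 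This gives $C_1(m)=0$ and $C_L(m)=2^{L-2}\sigma(m)$ for $L\ge2$, whose geometric sum is $(2^{k-1}-1)\sigma(m)$.
\end{itemize}
Your route takes a few lines but makes the theorem a mere corollary of the external formula \eqref{eq1}, and it explains nothing about where the terms come from. The paper's route needs only the two theta identities for $\psi(q)^2$ and $\psi(q)^4$, and it identifies which representations account for each term. Read in reverse, it also shows that the even-argument cases of \eqref{eq1} follow from its odd-argument case.

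Your interpretive combinatorial sketch, which you rightly do not rely on, is wrong in two places. First, the class $v_1\ne v_2$ is not impossible. The condition $\min(v_1,v_2)=k$ just means both parts are divisible by $2^k$, and this class is exactly where $a(m)$ comes from. Second, the correction sums run over pairs of distinct odd part sizes, not single part sizes. That is why the paper subtracts the diagonal $x=y$ from $F(q)^2$ and halves.
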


Although this identity can be derived algebraically from
formula~\eqref{eq1}, the new content of
Theorem~\ref{thm:structural-relation} lies in the decomposition of the
underlying signed representations. Those for which the two part sizes
have different \(2\)-adic valuations are in a weight-preserving
bijection with the representations counted by \(a(m)\), whereas the
representations with equal \(2\)-adic valuations have total signed
weight
$(2^{k-1}-1)\sigma(m)$.
The evaluation of this latter contribution is completed by two
classical theta-function identities.

Combining Theorem~\ref{thm:structural-relation} with
formula~\eqref{eq1} gives
\[
a(2^km)
=
\frac{
	(2^{k+1}-3)\sigma(m)-\lambda(m)
}{4}.
\]
If
$m\equiv3\pmod4$,
then \(\lambda(m)=0\), and hence
$4a(2^km)=(2^{k+1}-3)\sigma(m)$.
This yields the following dyadic family.

\begin{corollary}\label{Th1.3}
	Let \(m\) be a positive integer satisfying
	\(
	m\equiv3\pmod4,
	\)
	and let \(k\ge2\). Then
	\[
	a(2^km)\equiv0\pmod{2^{k+1}-3}.
	\]
\end{corollary}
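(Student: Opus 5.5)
The plan is to use the closed form obtained just above by combining Theorem~\ref{thm:structural-relation} with formula~\eqref{eq1}, and then to argue divisibility of an integer product. First, for odd \(m\) and \(k\ge1\) we have
\[
4a(2^km)=(2^{k+1}-3)\sigma(m)-\lambda(m).
\]
To check this, substitute \eqref{eq1} for \(a(m)\) with \(m\) odd: the terms \(\sigma(m/2)\) and \(\sigma(m/4)\) vanish, so \(a(m)=(\sigma(m)-\lambda(m))/4\). Then add \((2^{k-1}-1)\sigma(m)\) as Theorem~\ref{thm:structural-relation} prescribes.

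Next, for \(m\equiv3\pmod4\) we show that \(\lambda(m)=0\). The divisors of \(m\) pair off as \(d\leftrightarrow m/d\). Since \(d\cdot(m/d)\equiv3\pmod4\) and both factors are odd, exactly one of \(d,m/d\) is \(\equiv1\) and the other is \(\equiv3\pmod4\). Also \(d\neq m/d\), because \(m\) is not a square (a square would be \(\equiv1\pmod4\)). Hence \(\chi_4(d)+\chi_4(m/d)=0\) for each pair, and \(\lambda(m)=0\). This gives
\[
4a(2^km)=(2^{k+1}-3)\sigma(m).
\]

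Finally, \(2^{k+1}-3\) is odd, hence coprime to \(4\). Since \(a(2^km)\) is an integer, this equality shows that \(2^{k+1}-3\) divides \(4a(2^km)\), and therefore divides \(a(2^km)\). The same pairing shows \(\sigma(m)\equiv0\pmod4\): each pair contributes \(d+m/d\equiv1+3\equiv0\pmod4\). So we may even write \(a(2^km)=(2^{k+1}-3)\cdot\sigma(m)/4\) explicitly.

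There is no real obstacle; the only point needing care is the vanishing of \(\lambda(m)\), including the observation that \(m\) cannot be a square. The hypothesis \(k\ge2\) only ensures the modulus \(2^{k+1}-3\ge5\) is nontrivial. For \(k=1\) the modulus is \(1\) and the statement holds trivially.
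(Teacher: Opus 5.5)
Your proposal is correct and follows the paper's argument. Like the paper, you combine Theorem~\ref{thm:structural-relation} with formula~\eqref{eq1} to get $4a(2^km)=(2^{k+1}-3)\sigma(m)-\lambda(m)$, use $\lambda(m)=0$, and conclude because $4$ is invertible modulo the odd number $2^{k+1}-3$. The only difference is that the paper proves $\lambda(m)=0$ by multiplicativity, using a prime $p\equiv3\pmod4$ dividing $m$ to an odd power, whereas your fixed-point-free pairing $d\leftrightarrow m/d$ does the same job and also gives $4\mid\sigma(m)$.
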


Taking \(m=4n+3\) and \(k=2,3,4,5\) yields the first four
instances, with moduli \(5\), \(13\), \(29\), and \(61\), respectively;
the corresponding congruences are recorded in
Corollary~\ref{Cor1.4}.

The arithmetic formula also gives two families of congruences modulo
\(3\). They follow from elementary divisibility properties of
\(\sigma(n)\) and from the vanishing of \(\lambda(n)\) on suitable
arithmetic progressions.

\begin{corollary}\label{thm:general-mod3-first}
	For all integers \(\alpha,\beta,n\ge0\),
	\[
	a\!\left(
	2^\alpha3^\beta(12n+11)
	\right)
	\equiv0\pmod3.
	\]
\end{corollary}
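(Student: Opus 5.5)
The plan is to combine Theorem~\ref{thm:structural-relation} with formula~\eqref{eq1}, so that \(a(2^\alpha m)\) is expressed through \(\sigma(m)\) and \(\lambda(m)\) for an odd \(m\). We then show that \(\sigma(m)\equiv0\pmod 3\) and \(\lambda(m)=0\) when \(m=3^\beta(12n+11)\).

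\emph{Reduction to \(\sigma\) and \(\lambda\).} Put \(r=12n+11\) and \(m=3^\beta r\), which is odd. If \(\alpha=0\), formula~\eqref{eq1} reduces to \(4a(m)=\sigma(m)-\lambda(m)\), since \(\sigma(m/2)=\sigma(m/4)=0\). If \(\alpha\ge1\), the consequence of Theorem~\ref{thm:structural-relation} and \eqref{eq1} recorded after the theorem gives \(4a(2^\alpha m)=(2^{\alpha+1}-3)\sigma(m)-\lambda(m)\). In both cases, since \(a\) is integer-valued and \(4\) is invertible modulo \(3\), it suffices to prove \(\lambda(m)=0\) and \(3\mid\sigma(m)\).

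\emph{Multiplicativity.} Since \(r\equiv2\pmod3\), we have \(\gcd(3^\beta,r)=1\). Both \(\sigma\) and \(\lambda=1*\chi_4\) are multiplicative, so \(\sigma(m)=\sigma(3^\beta)\sigma(r)\) and \(\lambda(m)=\lambda(3^\beta)\lambda(r)\). It is therefore enough to show \(\lambda(r)=0\) and \(3\mid\sigma(r)\); the factor \(3^\beta\) plays no role beyond this.

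\emph{The two vanishing facts.} For \(\lambda(r)\), note \(r\equiv3\pmod4\), so we pair each divisor \(d\) of \(r\) with \(r/d\). Because \(d\cdot(r/d)\equiv3\pmod4\), exactly one of \(d,r/d\) is \(\equiv1\) and the other \(\equiv3\pmod4\), so \(\chi_4(d)+\chi_4(r/d)=0\). No divisor is paired with itself, since \(r\) is not a square (squares of odd numbers are \(\equiv1\pmod4\)); hence \(\lambda(r)=0\). For \(\sigma(r)\), the same pairing works modulo \(3\). Since \(r\equiv2\pmod3\), neither \(d\) nor \(r/d\) is divisible by \(3\), and their product is \(\equiv2\), so one is \(\equiv1\) and the other \(\equiv2\pmod3\). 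Thus \(d+r/d\equiv0\pmod3\), and \(r\) is not a square because \(2\) is not a square modulo \(3\). Summing over the pairs gives \(3\mid\sigma(r)\).

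There is no real obstacle here. The only point needing care is that the \(\alpha=0\) case is not covered by Theorem~\ref{thm:structural-relation} (which assumes \(k\ge1\)) and has to be read directly from \eqref{eq1}. One should also avoid the tempting argument that \(\lambda(m)=0\) because \(m\equiv3\pmod4\): that fails when \(\beta\) is odd, since then \(m\equiv1\pmod4\), and the factorization \(\lambda(m)=\lambda(3^\beta)\lambda(r)\) is what handles this case.
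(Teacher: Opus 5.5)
Your proof is correct and follows the paper's route: you split into $\alpha=0$ (via \eqref{eq1}) and $\alpha\ge1$ (via Corollary~\ref{Th1.1}), and you use multiplicativity to reduce to $\lambda(12n+11)=0$ and $3\mid\sigma(12n+11)$. The only difference is that you prove these two facts by pairing $d\leftrightarrow r/d$, whereas the paper cites Lemma~\ref{lem:lambda-properties}(iii) and Lemma~\ref{lem:sigma3}, which find a prime $p\equiv3\pmod4$ (resp.\ $p\equiv2\pmod3$) dividing $r$ to an odd power; both arguments are valid.
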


\begin{corollary}\label{thm:general-mod3-second}
	For all integers \(\alpha,n\ge0\),
	\[
	a\!\left(
	2^\alpha(18n+15)
	\right)
	\equiv0\pmod3.
	\]
\end{corollary}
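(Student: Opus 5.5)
The plan is to reduce the congruence to two divisibility facts about \(m:=18n+15\), namely \(\sigma(m)\equiv0\pmod3\) and \(\lambda(m)=0\), and then read the result off from the arithmetic formula \eqref{eq1} together with its dyadic consequence.

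First I would determine the \(3\)-adic structure of \(m\). We have \(m=3(6n+5)\), and \(6n+5\equiv2\pmod3\), so \(3\nmid 6n+5\). Hence \(m=3r\) with \(r=6n+5\), \(\gcd(r,3)=1\) and \(r\equiv2\pmod3\); in particular \(3\,\|\,m\).

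Next I would prove the two divisibility facts.

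\textbf{\(\sigma(m)\equiv0\pmod3\).} By multiplicativity, \(\sigma(m)=\sigma(3)\sigma(r)=4\sigma(r)\). It therefore suffices to show \(\sigma(r)\equiv0\pmod 3\). Pair each divisor \(d\) of \(r\) with \(r/d\).
\begin{itemize}
\item Since \(r\equiv2\pmod3\) and squares are \(\not\equiv2\pmod3\), \(r\) is not a perfect square, so \(d\neq r/d\) for every divisor \(d\).
\item Since \(d\cdot(r/d)\equiv2\pmod3\), one of \(d,\ r/d\) is \(\equiv1\) and the other is \(\equiv2\pmod3\).
\end{itemize}
So each pair contributes \(d+r/d\equiv0\pmod3\), and therefore \(\sigma(r)\equiv0\pmod3\).

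\textbf{\(\lambda(m)=0\).} The function \(\lambda=\mathbf 1*\chi_4\) is multiplicative, and \(\lambda(3)=1+\chi_4(3)=0\). Since \(\gcd(3,r)=1\), we get \(\lambda(m)=\lambda(3)\lambda(r)=0\).

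Finally I would split according to \(\alpha\).
\begin{itemize}
\item \textbf{Case \(\alpha=0\).} Since \(m\) is odd, \eqref{eq1} gives \(4a(m)=\sigma(m)-\lambda(m)\equiv0\pmod3\).
\item \textbf{Case \(\alpha\ge1\).} Combining Theorem~\ref{thm:structural-relation} with \eqref{eq1} gives the displayed formula \(4a(2^\alpha m)=(2^{\alpha+1}-3)\sigma(m)-\lambda(m)\), which is \(\equiv0\pmod3\).
\end{itemize}
Since \(4\) is invertible modulo \(3\), both cases yield \(a(2^\alpha(18n+15))\equiv0\pmod3\).

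There is no serious obstacle here. The only point needing care is the observation that \(3\) divides \(m\) exactly once; this is what makes \(\lambda\) vanish through the factor \(\lambda(3)=0\), and what lets \(\sigma(m)\) reduce to \(\sigma(r)\) with \(r\equiv2\pmod3\).
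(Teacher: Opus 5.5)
Your proof is correct and follows essentially the paper's argument. You factor \(18n+15=3(6n+5)\), use multiplicativity to get \(\sigma(m)=4\sigma(6n+5)\equiv0\pmod3\) and \(\lambda(m)=\lambda(3)\lambda(6n+5)=0\), and then split into \(\alpha=0\) via \eqref{eq1} and \(\alpha\ge1\) via Corollary~\ref{Th1.1}. The only difference is that you prove \(\sigma(6n+5)\equiv0\pmod3\) by pairing each divisor \(d\) with \(r/d\), whereas the paper cites Lemma~\ref{lem:sigma3}, which uses a prime \(p\equiv2\pmod3\) of odd exponent; both arguments are valid.
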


The case \(\alpha=\beta=0\) of the first congruence was established in
\cite{Merca} by a different method. Formula~\eqref{eq1} shows that it
belongs to the larger family above.

Our second objective is to transfer the arithmetic information carried
by \(a(n)\) to weighted sums involving the partition function
\(\operatorname{pod}(n)\). Recall that
\(\operatorname{pod}(n)\) counts partitions in which odd parts are
distinct and even parts are unrestricted. Its arithmetic properties
have been studied extensively; see
\cite{BMpod,CGM15,FXY20,HS10,RS11,W15}.

Let
\[
P(q):=
\sum_{n\ge0}\operatorname{pod}(n)q^n.
\]
In \cite{Merca}, the generating function \(A(q)\) was shown to admit
the factorization
\[
A(q)=P(q)S(q),
\]
where
\[
S(q)
=
\sum_{r=1}^{\infty}
(-1)^{T_{r-1}}
T_{\lceil r/2\rceil}
q^{T_{r+1}},
\qquad
T_j:=\frac{j(j+1)}2.
\]

For an odd prime \(p\), define
\[
\varepsilon_s
:=
(-1)^{T_{s-1}}
T_{\lceil s/2\rceil}
\]
and, for \(N\ge1\),
\[
\mathcal W_p(N)
:=
\sum_{s=1}^{2p-4}
\varepsilon_s
\sum_{j\in\mathbb Z}
\operatorname{pod}
\!\left(
N-T_{4pj+s+1}
\right).
\]
We adopt the convention
\[
\operatorname{pod}(N)=0
\qquad(N<0),
\]
so that all bilateral sums are finite.

The coefficients of \(S(q)\) are periodic modulo \(p\), with period
dividing \(4p\). Pairing the corresponding residue classes gives a
uniform reduction of \(S(q)\) modulo every odd prime. Combined with
formula~\eqref{eq1}, this produces the following explicit transfer
principle.

\begin{theorem}\label{thm:explicit-transfer}
	Let \(p\) be an odd prime. Then, for every integer \(N\ge1\),
	\[
	\mathcal W_p(N)
	\equiv
	\frac{\sigma(N)-\lambda(N)}4
	-\frac{\sigma(N/2)}2
	+\sigma(N/4)
	\pmod p.
	\]
	In particular, if \(m\) is an odd positive integer and
	\(\kappa\ge1\), then
	\[
	\mathcal W_p(2^\kappa m)
	\equiv
	\frac{
		(2^{\kappa+1}-3)\sigma(m)-\lambda(m)
	}{4}
	\pmod p.
	\]
	Here division by \(2\) and \(4\) is understood modulo \(p\).
\end{theorem}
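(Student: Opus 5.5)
The plan is to reduce everything to a congruence between power series with integer coefficients,
\[
S(q)\equiv \sum_{s=1}^{2p-4}\varepsilon_s\sum_{j\in\mathbb Z} q^{T_{4pj+s+1}}\pmod p,
\]
coefficientwise. Once this holds, multiplying by \(P(q)\) and reading off the coefficient of \(q^N\) gives \([q^N]P(q)S(q)\equiv \mathcal W_p(N)\pmod p\). The finiteness of the inner sums follows from the convention \(\operatorname{pod}(N)=0\) for \(N<0\) and from \(T_{4pj+s+1}\to\infty\) as \(|j|\to\infty\). By the factorization \(A(q)=P(q)S(q)\) from \cite{Merca}, the left side is \(a(N)\). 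Formula~\eqref{eq1} then gives the first assertion, read modulo \(p\) with \(2,4\) invertible since \(p\) is odd. The second assertion follows from the same combination of Theorem~\ref{thm:structural-relation} with \eqref{eq1} that precedes Corollary~\ref{Th1.3}, namely \(a(2^\kappa m)=\bigl((2^{\kappa+1}-3)\sigma(m)-\lambda(m)\bigr)/4\).

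For the congruence of series I would proceed in three steps.

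\emph{Periodicity.} The sign \((-1)^{T_{r-1}}\) depends only on \(r\bmod 4\). The value \(T_{\lceil r/2\rceil}\bmod p\) depends only on \(\lceil r/2\rceil\bmod p\), hence only on \(r\bmod 2p\). So \(\varepsilon_r\bmod p\) depends only on \(r\bmod 4p\), and I would write \(S(q)\equiv\sum_{s}\varepsilon_s\sum_{j\ge0,\,4pj+s\ge1}q^{T_{4pj+s+1}}\), with \(s\) running over a full residue system modulo \(4p\).

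\emph{Vanishing classes.} Since \(T_c\equiv0\pmod p\) exactly when \(c\equiv0\) or \(c\equiv-1\pmod p\), the classes with \(\lceil s/2\rceil\equiv 0,-1\pmod p\) drop out. These are \(s\equiv 2p-3,\,2p-2,\,2p-1,\,2p\) and \(s\equiv 4p-3,\dots,4p\pmod{4p}\).

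\emph{Folding.} I would use the symmetry \(T_{x}=T_{-x-1}\), which gives \(T_{r+1}=T_{-r-2}\). The terms of the bilateral sum with \(j<0\), where \(r=4pj+s\le -1\), therefore reproduce the positive indices \(r'=-r-2\equiv 4p-2-s\pmod{4p}\), with \(r'\ge 2p+2\). Under \(s\mapsto 4p-2-s\), the block \(1\le s\le 2p-4\) is carried onto \(2p+2\le s'\le 4p-3\). Together with the vanishing classes, this should exhaust all residues modulo \(4p\), so each positive index \(r\) is counted exactly once.

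The main obstacle is the compatibility of coefficients under this reflection. One must check \(\varepsilon_{4p-2-s}\equiv\varepsilon_s\pmod p\) for every \(1\le s\le 2p-4\), and this check involves both parts of \(\varepsilon\).

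\emph{The triangular factor.} Here \(\lceil(4p-2-s)/2\rceil\equiv -1-\lfloor s/2\rfloor\pmod p\), so \(T_{\lceil(4p-2-s)/2\rceil}\equiv T_{\lfloor s/2\rfloor}\). This equals \(T_{\lceil s/2\rceil}\) only for even \(s\). For odd \(s\) a mismatch \(T_{(s-1)/2}\) versus \(T_{(s+1)/2}\) appears.

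\emph{The sign.} The signs \((-1)^{T_{r-1}}\) must be compared via \(s\bmod 4\).

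I would first tabulate both factors over the four classes \(s\bmod 4\). If odd \(s\) does not match directly, I would try pairing instead the odd class \(s\) with a neighbouring reflected index. The aim is to see whether the sign change \((-1)^{T_{r-1}}\) compensates, so that the total coefficient of each \(q^{T_{r+1}}\) agrees modulo \(p\). I would also separately treat the boundary class \(s\equiv 2p+1\), since \(T_{p+1}\not\equiv0\) and this class is not obviously covered by the folding. I would likewise treat its reflection \(2p-3\). If the exponent sets genuinely coincide, these odd and boundary classes may require pairing \(r\) with \(-r-2\) inside the same period rather than across \(j\mapsto -j\).
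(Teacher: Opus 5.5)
There is a genuine gap in the folding step. Your outer argument matches the paper's: multiply the congruence for $S(q)$ by $P(q)$, compare coefficients of $q^N$, use $A=PS$ and formula~\eqref{eq1}, and use Corollary~\ref{Th1.1} for the dyadic case. Your periodicity and vanishing-class steps also agree with Lemmas~\ref{lem:general-periodicity} and~\ref{lem:general-reduction}. But you leave the coefficient check open, and as set up it cannot be closed, because of an off-by-one slip.

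The $r$-th term of $S(q)$ has exponent $T_{r+1}$. So a term with $j<0$, whose exponent is $T_{4pj+s+1}=T_{-4pj-s-2}$, is the term of index $r'=-4pj-s-3$. That is, $r'\equiv 4p-3-s\pmod{4p}$, not $4p-2-s$. Since $T$ is strictly increasing on nonnegative integers, this identification is forced, so there is no room to pair with a neighbouring index.

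Under your map $s\mapsto 4p-2-s$ the failure is not confined to odd $s$:
\begin{itemize}
\item For odd $s$, the factors $T_{\lceil s/2\rceil-1}$ and $T_{\lceil s/2\rceil}$ differ by $\lceil s/2\rceil$, which is not $0$ modulo $p$ when $1\le s\le 2p-4$.
\item For even $s$, the triangular factors agree, but $T_{s'-1}-T_{s-1}=(2p-1-s)(4p-3)$ is odd, so the signs are opposite.
\end{itemize}
For instance, with $p=3$, $s=1$ is sent to the vanishing class $9$, and $\varepsilon_8=10\not\equiv\varepsilon_2=-1\pmod 3$. The image is also wrong: $2p+1$ is missed and the vanishing class $4p-3$ is hit. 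That is where your ``boundary class'' came from.

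With the correct involution $s^*=4p-3-s$, which is the pairing used in the proof of Lemma~\ref{lem:general-reduction}, everything closes at once:
\begin{itemize}
\item The block $1\le s\le 2p-4$ is carried onto $2p+1\le s^*\le 4p-4$. Together with the eight vanishing classes $2p-3,\dots,2p$ and $4p-3,\dots,4p$, this is a complete residue system modulo $4p$.
\item For both parities of $s$ one has $\lceil s^*/2\rceil=2p-1-\lceil s/2\rceil$. Hence $T_{\lceil s^*/2\rceil}\equiv T_{-1-\lceil s/2\rceil}=T_{\lceil s/2\rceil}\pmod p$.
\item $T_{s^*-1}-T_{s-1}=2(p-1)(4p-2s-3)$ is even, so the signs agree.
\end{itemize}
Therefore $\varepsilon_{s^*}\equiv\varepsilon_s\pmod p$. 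The $j\ge0$ and $j<0$ parts of $\sum_{j\in\mathbb Z}q^{T_{4pj+s+1}}$ then account exactly for the positive indices in the classes $s$ and $s^*$, respectively.
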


Thus the weighted \(\operatorname{pod}\)-sum is not merely related to
\(a(N)\); it admits an explicit evaluation in terms of the arithmetic
functions \(\sigma\) and \(\lambda\). In particular, the dyadic
specialization gives the following family of congruences.

\begin{corollary}\label{cor:general-pod-dyadic}
	Let \(p\ge5\) be a prime, and let \(\kappa\ge2\) satisfy
	$2^{\kappa+1}\equiv3\pmod p$.
	Then, for every integer \(n\ge0\),
	\[
	\sum_{s=1}^{2p-4}
	\varepsilon_s
	\sum_{j\in\mathbb Z}
	\operatorname{pod}
	\!\left(
	2^\kappa(4n+3)-T_{4pj+s+1}
	\right)
	\equiv0\pmod p.
	\]
\end{corollary}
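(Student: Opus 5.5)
This corollary follows directly from the dyadic specialization in Theorem~\ref{thm:explicit-transfer}. Fix $n\ge0$ and set $m=4n+3$, which is odd, and $N=2^\kappa m$. Since $\kappa\ge2\ge1$, the second part of Theorem~\ref{thm:explicit-transfer} applies and gives
\[
\mathcal W_p(2^\kappa m)\equiv \frac{(2^{\kappa+1}-3)\sigma(m)-\lambda(m)}{4}\pmod p ,
\]
where division by $4$ is legitimate because $p$ is odd. By definition, the left-hand side is exactly the double sum in the statement of the corollary. So it suffices to show that the numerator on the right vanishes modulo $p$.

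Next I check that $\lambda(m)=0$. Since $m\equiv3\pmod4$, every divisor $d\mid m$ is odd, and the divisors pair off as $d\leftrightarrow m/d$. Because $d\cdot(m/d)\equiv3\pmod4$, exactly one member of each pair is $\equiv1$ and the other is $\equiv3\pmod4$. No divisor is paired with itself, since $m$ is not a square (squares of odd numbers are $\equiv1\pmod4$). Hence $\chi_4(d)+\chi_4(m/d)=0$ for each pair, and $\lambda(m)=0$. This is the same observation already used in deriving Corollary~\ref{Th1.3}.

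It remains to use the hypothesis $2^{\kappa+1}\equiv3\pmod p$. It gives $(2^{\kappa+1}-3)\sigma(m)\equiv0\pmod p$, so the numerator is $\equiv0\pmod p$. Since $4$ is invertible modulo $p$, the whole right-hand side is $\equiv0\pmod p$, which is the claimed congruence.

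The restriction $p\ge5$ plays no role in the argument. It reflects only that $2^{\kappa+1}\equiv3\pmod3$ would force $3\mid 2^{\kappa+1}$, which is impossible, so for $p=3$ the statement would be vacuous. There is no real obstacle here; the only point needing care is that the second part of Theorem~\ref{thm:explicit-transfer} may be invoked with $\kappa\ge2$ and $m=4n+3$ odd.
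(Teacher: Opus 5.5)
Your proof is correct and follows essentially the same route as the paper: set $m=4n+3$, apply the dyadic specialization of Theorem~\ref{thm:explicit-transfer}, use $\lambda(m)=0$, and finish with $2^{\kappa+1}\equiv3\pmod p$ and the invertibility of $4$ modulo $p$. The only difference is that you prove $\lambda(m)=0$ directly, by pairing each divisor $d$ with $m/d$, whereas the paper cites Lemma~\ref{lem:lambda-properties}(iii); both arguments are valid.
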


For \(p=5\), the condition on \(\kappa\) is equivalent to
$\kappa\equiv2\pmod4$,
whereas for \(p=13\) it is equivalent to
$\kappa\equiv3\pmod{12}$.
These specializations produce infinite families of weighted
congruences modulo \(5\) and \(13\).

The condition in Corollary~\ref{cor:general-pod-dyadic} has no solution
for \(p=3\). In that case, a more compact reduction of \(S(q)\),
combined with Corollaries~\ref{thm:general-mod3-first} and
\ref{thm:general-mod3-second}, gives two further families of weighted
congruences involving \(\operatorname{pod}(n)\).

The paper is organized as follows. Section~\ref{S2} records the
properties of \(\lambda(n)\) needed in the sequel.
Section~\ref{S3} develops the signed decomposition underlying
Theorem~\ref{thm:structural-relation}. Section~\ref{S4} derives its
dyadic congruence consequences, while Section~\ref{S5} treats the two
families modulo \(3\). Finally, Section~\ref{S6} establishes the
reduction of \(S(q)\) modulo odd primes, proves
Theorem~\ref{thm:explicit-transfer}, and derives the weighted
congruences for \(\operatorname{pod}(n)\).

\section{Arithmetic preliminaries}
\label{S2}

We record three elementary properties of the arithmetic function
\(\lambda(n)\) that are relevant to the dyadic structure studied
in this paper.

\smallskip
\begin{lemma}\label{lem:lambda-properties}
	The following properties hold:
	\begin{enumerate}
		\item[(i)] The function \(\lambda(n)\) is multiplicative.
		
		\item[(ii)] If \(m\) is odd, then
		$
		\lambda(2^k m)=\lambda(m)
		$
		for every integer \(k\ge0\).
		
		\item[(iii)] If \(m\) is odd and
		$m\equiv3\pmod4$,
		then
		$\lambda(m)=0$.
	\end{enumerate}
\end{lemma}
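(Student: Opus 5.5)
The plan is to derive all three properties from the fact that \(\lambda = \chi_4 * \mathbf 1\) is the Dirichlet convolution of the completely multiplicative character \(\chi_4\) with the constant function \(\mathbf 1\). Parts (ii) and (iii) then follow from (i) by evaluating \(\lambda\) on prime powers.

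For (i), I will invoke the standard fact that the Dirichlet convolution of two multiplicative functions is multiplicative. If a direct check is preferred, take \(\gcd(m,n)=1\). Every divisor \(d\) of \(mn\) factors uniquely as \(d=d_1d_2\) with \(d_1\mid m\) and \(d_2\mid n\). Since \(\chi_4\) is completely multiplicative, \(\chi_4(d_1d_2)=\chi_4(d_1)\chi_4(d_2)\), and the double sum factors as \(\lambda(m)\lambda(n)\).

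For (ii), I will use (i) to write \(\lambda(2^km)=\lambda(2^k)\lambda(m)\). Every divisor of \(2^k\) other than \(1\) is even, so \(\chi_4\) vanishes on it. Hence \(\lambda(2^k)=\chi_4(1)=1\). An alternative that avoids (i) works just as well: a divisor of \(2^km\) has the form \(2^jd\) with \(d\mid m\), and \(\chi_4(2^jd)=0\) whenever \(j\ge1\), so only the divisors of \(m\) contribute.

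For (iii), I will pair the divisors. If \(m\equiv3\pmod4\), then \(m\) is not a perfect square, so the involution \(d\mapsto m/d\) on divisors has no fixed points. Since \(d\cdot(m/d)=m\equiv3\pmod4\) and both factors are odd, exactly one of \(d\) and \(m/d\) is \(\equiv1\pmod 4\) and the other is \(\equiv3\pmod4\). Equivalently, \(\chi_4(d)\chi_4(m/d)=\chi_4(m)=-1\), so \(\chi_4(m/d)=-\chi_4(d)\). Each pair therefore contributes \(0\), and \(\lambda(m)=0\).

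There is no real obstacle here. The only point needing care is in (iii): the pairing requires \(m\) not to be a square, and this holds because every odd square is \(\equiv1\pmod4\).
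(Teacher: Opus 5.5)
Your proof is correct. Parts (i) and (ii) coincide with the paper's arguments; the paper proves (ii) by exactly your second argument, noting that only the odd divisors of \(2^km\) contribute.

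For (iii) you take a different route. The paper uses (i) to write \(\lambda(m)=\prod_{p^\alpha\parallel m}\bigl(1+\chi_4(p)+\cdots+\chi_4(p)^\alpha\bigr)\). It then observes that \(m\equiv3\pmod4\) forces some prime \(p\equiv3\pmod4\) to occur to an odd exponent, and that prime's local factor \(1-1+\cdots-1\) vanishes.

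Your involution \(d\mapsto m/d\), with \(\chi_4(m/d)=-\chi_4(d)\), is more self-contained. It needs neither (i) nor that observation about prime exponents. In fact the sign relation alone excludes fixed points, since a fixed point would give \(\chi_4(d)^2=-1\), so your non-square check is automatic. Equivalently, \(\lambda(m)=\sum_{d\mid m}\chi_4(m/d)=-\sum_{d\mid m}\chi_4(d)=-\lambda(m)\).

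In exchange, the paper's product formula gives a sharper criterion: \(\lambda(n)=0\) precisely when some prime \(p\equiv3\pmod4\) divides \(n\) to an odd power. This also covers integers such as \(21\equiv1\pmod4\), for which \(\lambda(21)=0\).
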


\begin{proof}
	Since
	$\lambda=\mathbf{1}*\chi_4$,
	where \(\mathbf{1}(n)=1\) and \(*\) denotes Dirichlet convolution,
	part~(i) follows from the multiplicativity of
	\(\mathbf{1}\) and \(\chi_4\).
	
	For part~(ii), every divisor of \(2^km\) has the form
	$2^j d$, $0\le j\le k$, $d\mid m$.
	Since
	\[
	\chi_4(2^jd)=0
	\qquad(j\ge1),
	\]
	only the odd divisors contribute to \(\lambda(2^km)\). Therefore
	\[
	\lambda(2^km)
	=
	\sum_{d\mid m}\chi_4(d)
	=
	\lambda(m).
	\]
	
	For part~(iii), write
	\[
	m=\prod_{p^\alpha\parallel m}p^\alpha.
	\]
	Because \(m\equiv3\pmod4\), at least one prime
	\(p\equiv3\pmod4\) occurs to an odd exponent \(\alpha\). By
	multiplicativity,
	\[
	\lambda(m)
	=
	\prod_{p^\alpha\parallel m}
	\left(
	1+\chi_4(p)+\cdots+\chi_4(p)^\alpha
	\right).
	\]
	For the prime \(p\equiv3\pmod4\) occurring to an odd exponent,
	\[
	1+\chi_4(p)+\cdots+\chi_4(p)^\alpha
	=
	1-1+\cdots-1
	=
	0.
	\]
	Hence \(\lambda(m)=0\).
\end{proof}

\section{A signed partition interpretation}
\label{S3}

For \(n\ge1\), let \(\mathcal A_n\) denote the set of all quadruples
\[
(x,y,r,s)\in \mathbb N^2\times \mathbb N_{0}^2
\]
satisfying
\[
x<y,\qquad n=(2r+1)x+(2s+1)y.
\]
We attach to such a quadruple the weight
\[
w(x,y,r,s):=(-1)^{r+s}.
\]
Expanding the defining generating function of \(a(n)\), we obtain
\[
\begin{aligned}
A(q)
&=\sum_{1\le x<y}
\frac{q^{x+y}}{(1+q^{2x})(1+q^{2y})} \\
&=\sum_{1\le x<y}\sum_{r,s\ge0}
(-1)^{r+s}q^{(2r+1)x+(2s+1)y}.
\end{aligned}
\]
Hence
\[
a(n)=\sum_{(x,y,r,s)\in\mathcal A_n} w(x,y,r,s).
\]


Define
\[
F(q):=\sum_{\substack{n\ge1\\ n\ {\rm odd}}}
\frac{q^n}{1+q^{2n}}.
\]

\begin{lemma}\label{lem:jacobi-odd-square}
	We have
	\[
	F(q)=\sum_{\substack{n\ge1\\ n\ {\rm odd}}}\lambda(n)\,q^n
	\]
	and, for every odd positive integer \(m\),
	\[
	[q^{2^L m}]F(q)^2=
	\begin{cases}
	\sigma(m), & L=1,\\
	0, & L\ge2.
	\end{cases}
	\]
\end{lemma}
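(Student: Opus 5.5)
We prove the two assertions of Lemma~\ref{lem:jacobi-odd-square} in turn: the first by direct expansion, the second through Jacobi's sums-of-two-squares theorem.

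\emph{First assertion.} For odd \(n\) we expand
\[
\frac{q^n}{1+q^{2n}}=\sum_{t\ge0}(-1)^tq^{(2t+1)n}.
\]
Hence the coefficient of \(q^N\) in \(F(q)\) is the sum of \((-1)^t\) over the factorizations \(N=(2t+1)n\) with \(n\) odd. Such factorizations exist only when \(N\) is odd, and then \(d=2t+1\) runs over all divisors of \(N\). Since \((-1)^t=\chi_4(2t+1)\), the coefficient equals \(\sum_{d\mid N}\chi_4(d)=\lambda(N)\).

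\emph{Second assertion: reduction to theta series.} We recall the classical identity
\[
\theta(q)^2=1+4\sum_{n\ge1}\lambda(n)q^n,\qquad \theta(q)=\sum_{j\in\mathbb Z}q^{j^2}.
\]
This is Jacobi's formula \(r_2(n)=4\lambda(n)\), one of the two classical theta evaluations mentioned in the introduction. Using Lemma~\ref{lem:lambda-properties}(ii) and the first assertion, the odd part of \(\theta(q)^2\) is
\[
4F(q)=\tfrac12\bigl(\theta(q)^2-\theta(-q)^2\bigr).
\]
Equivalently, \(4F(q)\) is the generating function of representations \(n=a^2+b^2\) with \(a+b\) odd. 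Thus
\[
16F(q)^2=\tfrac14\bigl(\theta(q)^2-\theta(-q)^2\bigr)^2,
\]
and \(16[q^N]F(q)^2\) counts quadruples \((a,b,c,d)\) with \(a^2+b^2+c^2+d^2=N\), \(a+b\) odd and \(c+d\) odd.

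\emph{Counting via Jacobi's four-square theorem.} For \(N=2^Lm\), a sum of four squares is even exactly when an even number of the entries is odd. The condition ``\(a+b\) odd and \(c+d\) odd'' selects the quadruples with exactly two odd entries, one in each pair.
\begin{itemize}
\item If \(L\ge2\), then \(N\equiv0\pmod4\). Two odd squares plus two even squares is \(\equiv2\pmod4\), so no such quadruple exists, and the coefficient is \(0\).
\item If \(L=1\), then \(N\equiv2\pmod4\), so the representations of \(N\) as a sum of four squares have exactly two odd entries. There are \(\binom42=6\) placements of the two odd entries, of which \(4\) are admissible (one odd entry in each pair). By symmetry, the number of admissible representations is \(\tfrac46 r_4(N)\).
\end{itemize}
Jacobi's four-square theorem gives \(r_4(N)=8\sum_{4\nmid d\mid N}d\). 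For \(N=2m\) this sum is \(\sigma(m)+2\sigma(m)=3\sigma(m)\), so \(r_4(2m)=24\sigma(m)\). The admissible count is therefore \(16\sigma(m)\), and dividing by \(16\) gives \([q^{2m}]F(q)^2=\sigma(m)\).

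\emph{Main obstacle.} The delicate step is the symmetry claim that each admissible placement of the odd coordinates carries exactly one sixth of \(r_4(2m)\). This holds because permuting coordinates is a bijection on representations that moves any placement of the two odd entries to any other. If a more elementary route is preferred, one can instead count directly: the pairs of odd squares summing to \(2m\), which correspond to \(x^2+y^2=m\) via \(a=x+y\), \(c=x-y\).
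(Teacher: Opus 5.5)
Your proof is correct, and for the second assertion it takes a genuinely different route from the paper. The first assertion is handled exactly as in the paper.

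\textbf{The paper's route.} The paper works with Ramanujan's $\psi(q)=\sum_{n\ge0}q^{n(n+1)/2}$. It uses $\psi(q)^2=\sum_{n\ge0}\lambda(4n+1)q^n$, together with Lemma~\ref{lem:lambda-properties}(iii), to write $F(q)=q\,\psi(q^4)^2$. Then $\psi(q)^4=\sum_{n\ge0}\sigma(2n+1)q^n$ gives the closed form $F(q)^2=\sum_{n\ge0}\sigma(2n+1)q^{4n+2}$, and both cases $L=1$ and $L\ge2$ are read off at once.

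\textbf{Your route.} You work with $\theta(q)=\sum_{j}q^{j^2}$ instead. Jacobi's two-square theorem identifies $4F(q)$ as the odd part of $\theta(q)^2$, that is, the generating function of $a^2+b^2$ with $a+b$ odd. The case $L\ge2$ then becomes a transparent mod-$4$ obstruction. This implicitly recovers $\lambda(n)=0$ for $n\equiv3\pmod4$, which the paper imports from Lemma~\ref{lem:lambda-properties}(iii). The case $L=1$ follows from $r_4(2m)=24\sigma(m)$ and the $S_4$-symmetry count that four of the six placements of the two odd entries are admissible.

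\textbf{Comparison.} Your argument rests on the more widely known two- and four-square theorems and gives a concrete lattice-point explanation of why the coefficient vanishes when $4\mid N$. The cost is extra bookkeeping: the parity analysis and the factor $4/6$. The paper's argument is shorter and yields the entire series $F(q)^2$ in closed form, but it relies on the less commonly quoted $\psi$-identities.

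\textbf{Two minor points.} Neither affects the validity of your main argument.
\begin{enumerate}
\item Lemma~\ref{lem:lambda-properties}(ii) plays no role in identifying the odd part of $\theta(q)^2$; the first assertion alone suffices.
\item The ``more elementary'' alternative in your last sentence is not a complete argument as stated. The even entries $b,d$ of an admissible quadruple need not vanish, so counting pairs of odd squares summing to $2m$ does not by itself give the coefficient.
\end{enumerate}
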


\begin{proof}
	Expanding each geometric series gives
	\[
	F(q)=
	\sum_{\substack{n\ge1\\ n\ {\rm odd}}}
	\sum_{r\ge0}(-1)^r q^{(2r+1)n}.
	\]
	Thus, for \(n\) odd, the coefficient of \(q^n\) in \(F(q)\) is
	\[
	\sum_{d\mid n}\chi_4(d)=\lambda(n),
	\]
	which proves the first identity.
	
	For the second assertion, let
	\[
	\psi(q)=\sum_{n\ge0}q^{n(n+1)/2}
	\]
	be Ramanujan's theta function. 	According to \cite[Theorem 3.44, eq. (3.111)]{Cooper}, we have
	\[
	\psi(q)^2 = \sum_{n\ge 0} \lambda(4n+1)\,q^n.
	\]
	So, by Lemma~\ref{lem:lambda-properties}(iii),
	\[
	F(q)=
	q\sum_{n\ge0}\lambda(4n+1)\,q^{4n} =q\,\psi(q^4)^2.
	\]
	According to \cite[Theorem 3.44, eq. (3.112)]{Cooper}, we have
	\[
	\psi(q)^4 = \sum_{n\ge 0} \sigma(2n+1)\,q^n.
	\]
	Hence
	\[
	F(q)^2 = q^2\,\psi(q^4)^4 = \sum_{n\ge0} \sigma(2n+1)\,q^{4n+2}.
	\]
	
	The coefficient of \(q^{2^L m}\) in $F(q)^2$ is nonzero only when
	$2^L m=4n+2$,
	or equivalently
	$2^{L-1}m=2n+1$.
	Since \(m\) is odd, this forces \(L=1\), and then \(n=(m-1)/2\). In this
	case the coefficient is \(\sigma(m)\). For \(L\ge2\), no such exponent
	occurs, and the coefficient is \(0\).
\end{proof}

For \(n\ge1\), let \(\mathcal A'_n\) denote the set of all quadruples
\[
(x,y,r,s)\in (2\mathbb N-1)^2\times \mathbb N_0^2
\]
satisfying
\[
x<y,
\qquad
n=(2r+1)x+(2s+1)y.
\]
For \(L\ge1\) and for an odd positive integer \(m\), define
\[
C_L(m):=
\sum_{(x,y,r,s)\in \mathcal A'_{2^L m}}
(-1)^{r+s}.
\]

\begin{lemma}\label{lem:odd-level-contribution}
	Let \(m\) be an odd positive integer. Then
	\[
	C_L(m)=
	\begin{cases}
	0, & L=1,\\
	2^{L-2}\sigma(m), & L\ge2.
	\end{cases}
	\]
\end{lemma}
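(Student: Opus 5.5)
We express \(C_L(m)\) through the series \(F(q)\) of Lemma~\ref{lem:jacobi-odd-square}. Expanding the geometric series, the generating function of all ordered pairs of odd part sizes \((x,y)\), each used an odd number of times with sign \((-1)^{r}\) and \((-1)^{s}\), is
\[
F(q)^2=\sum_{\substack{x,y\ge1\\ x,y\ {\rm odd}}}\ \sum_{r,s\ge0}(-1)^{r+s}q^{(2r+1)x+(2s+1)y}.
\]
This sum contains three kinds of terms: those with \(x<y\), those with \(x>y\), and those on the diagonal \(x=y\). Swapping \((x,r)\leftrightarrow(y,s)\) shows that the first two kinds contribute equally. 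Hence
\[
[q^{n}]F(q)^2 = 2\sum_{(x,y,r,s)\in\mathcal A'_n}(-1)^{r+s}+D(n),
\]
where
\[
D(n)=[q^n]\sum_{x\ {\rm odd}}\frac{q^{2x}}{(1+q^{2x})^2}
\]
is the diagonal contribution.

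Next we compute \(D(n)\). We have
\[
\frac{q^{2x}}{(1+q^{2x})^2}=\sum_{t\ge1}(-1)^{t-1}t\,q^{2xt}.
\]
So for \(n=2^Lm\) with \(L\ge1\),
\[
D(2^Lm)=\sum_{\substack{x t=2^{L-1}m\\ x\ {\rm odd}}}(-1)^{t-1}t .
\]
The condition that \(x\) is odd forces \(t=2^{L-1}d\) with \(d\mid m\).

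\emph{Case \(L=1\).} Here \(t=d\) is odd, so \(D(2m)=\sigma(m)\).

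\emph{Case \(L\ge2\).} Here \(t\) is even, so \(D(2^Lm)=-2^{L-1}\sigma(m)\).

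Finally, Lemma~\ref{lem:jacobi-odd-square} gives \([q^{2^Lm}]F(q)^2=\sigma(m)\) for \(L=1\) and \(0\) for \(L\ge2\). Solving \(2C_L(m)=[q^{2^Lm}]F^2-D(2^Lm)\) then gives:

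\emph{Case \(L=1\).} \(2C_1(m)=\sigma(m)-\sigma(m)=0\).

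\emph{Case \(L\ge2\).} \(2C_L(m)=0+2^{L-1}\sigma(m)\), so \(C_L(m)=2^{L-2}\sigma(m)\).

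The main care point is the diagonal term. Pairs with \(x=y\) are excluded from \(\mathcal A'_n\), and each such pair appears exactly once in \(F^2\), indexed by \((r,s)\) with weight \((-1)^{r+s}\). Collecting by \(t=r+s+1\) gives multiplicity \(t\) and sign \((-1)^{t-1}\), matching the series above. The factor \(2\) from the symmetry must also be checked: the swap is a weight-preserving involution between the \(x<y\) and \(x>y\) terms with no fixed points, since \(x\ne y\) off the diagonal.
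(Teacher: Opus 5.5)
Your proof is correct and follows essentially the same route as the paper. You write $F(q)^2$ as the signed generating function of ordered pairs of odd part sizes, remove the diagonal series $\sum_{x\ \mathrm{odd}} q^{2x}/(1+q^{2x})^2$, halve by the swap symmetry, and evaluate the two coefficients exactly as the paper does (the only addition is your explicit check that the diagonal terms collect as $(-1)^{t-1}t\,q^{2xt}$ with $t=r+s+1$, which the paper leaves implicit).
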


\begin{proof}
	The series
	\[
	F(q)^2
	=
	\sum_{x,y\ {\rm odd}}
	\frac{q^{x+y}}
	{(1+q^{2x})(1+q^{2y})}
	\]
	is the signed generating function for all ordered pairs
	\((x,y)\) of odd part sizes, including the diagonal
	\(x=y\).
	
	Indeed, the coefficient of \(q^n\) in \(F(q)^2\) is
	\[
	\sum_{\substack{(2r+1)x+(2s+1)y=n\\x,y\ {\rm odd}\\ r,s\ge0}}
	(-1)^{r+s},
	\]
	where the ordered pairs \((x,y)\) and \((y,x)\) are counted
	separately whenever \(x\neq y\).
	
	Our quantity \(C_L(m)\), however, counts only representations
	with distinct odd part sizes satisfying \(x<y\).
	Hence we first remove the diagonal contribution
	\[
	D(q):=
	\sum_{x\ {\rm odd}}
	\frac{q^{2x}}{(1+q^{2x})^2},
	\]
	which corresponds exactly to the case \(x=y\).
	
	The remaining representations occur in symmetric pairs
	\((x,y)\) and \((y,x)\), each having the same weight
	\((-1)^{r+s}\). Therefore, for every odd positive integer
	\(m\) and every \(L\ge1\),
	\[
	C_L(m)
	=
	\frac12
	[q^{2^Lm}]
	\bigl(F(q)^2-D(q)\bigr).
	\]
	
	By Lemma~\ref{lem:jacobi-odd-square},
	\[
	[q^{2m}]F(q)^2=\sigma(m),
	\qquad
	[q^{2^L m}]F(q)^2=0
	\quad (L\ge2).
	\]
	
	It remains to compute the coefficients of \(D(q)\). Since
	\[
	\frac{z}{(1+z)^2}
	=
	\sum_{j\ge1}(-1)^{j-1}jz^j,
	\]
	we obtain
	\[
	D(q)
	=
	\sum_{x\ {\rm odd}}
	\sum_{j\ge1}
	(-1)^{j-1}j\,q^{2jx}.
	\]
	
	Hence the coefficient of \(q^{2n}\) in \(D(q)\) is
	\[
	[q^{2n}]D(q)
	=
	\sum_{\substack{x\mid n\\ x\ {\rm odd}}}
	(-1)^{n/x-1}\frac{n}{x}.
	\]
	
	Now write
	\(
	n=2^{L-1}m,
	\)
	where \(m\) is odd.
	If \(L=1\), then \(n=m\) is odd, so \(n/x\) is odd for every divisor
	\(x\mid m\). Consequently,
	\[
	[q^{2m}]D(q)
	=
	\sum_{x\mid m}\frac{m}{x}
	=
	\sigma(m).
	\]
	Assume now that \(L\ge2\). Since \(x\) is odd, every odd divisor of
	\(n\) is a divisor of \(m\), and
	\[
	\frac{n}{x}
	=
	2^{L-1}\frac{m}{x}
	\]
	is even. Therefore
	\[
	[q^{2^Lm}]D(q)
	=
	-\sum_{x\mid m}
	2^{L-1}\frac{m}{x}
	=
	-2^{L-1}\sigma(m).
	\]
	
	Combining the two coefficient evaluations gives
	\[
	C_1(m)
	=
	\frac12\bigl(\sigma(m)-\sigma(m)\bigr)
	=
	0,
	\]
	and, for \(L\ge2\),
	\[
	C_L(m)
	=
	\frac12\bigl(0-(-2^{L-1}\sigma(m))\bigr)
	=
	2^{L-2}\sigma(m).
	\]
	This completes the proof.
\end{proof}

\begin{proof}[\textbf{A signed proof of
		Theorem~\ref{thm:structural-relation}}]
	Fix an odd positive integer \(m\) and let \(k\ge1\). For a positive
	integer \(u\), let \(v_2(u)\) denote the exponent of \(2\) in its
	prime factorization. Define
	\[
	\mathcal A_{2^km}^{\ne}
	=
	\left\{
	(x,y,r,s)\in\mathcal A_{2^km}:
	v_2(x)\ne v_2(y)
	\right\}
	\]
	and
	\[
	\mathcal A_{2^km}^{=}
	=
	\left\{
	(x,y,r,s)\in\mathcal A_{2^km}:
	v_2(x)=v_2(y)
	\right\}.
	\]
	These two sets form a disjoint partition of
	\(\mathcal A_{2^km}\).
	
	Every positive integer admits a unique decomposition
	$x=2^\alpha X$, $y=2^\beta Y$, 
	where \(X\) and \(Y\) are odd.
	
	\medskip
	\noindent
	\textit{Case 1. \(v_2(x)\ne v_2(y)\).}
	Suppose that \(\alpha\ne\beta\), and put
	\(
	\gamma=\min(\alpha,\beta).
	\)
	Then
	\[
	\begin{aligned}
	(2r+1)x+(2s+1)y
	={}&
	2^\gamma
	\bigl(
	(2r+1)2^{\alpha-\gamma}X
	+(2s+1)2^{\beta-\gamma}Y
	\bigr).
	\end{aligned}
	\]
	Exactly one of the integers
	$2^{\alpha-\gamma}$, $2^{\beta-\gamma}$	is equal to \(1\), while the other is even. Since \(X,Y,2r+1\),
	and \(2s+1\) are odd, the expression inside the parentheses is
	odd. Therefore
	\[
	v_2\bigl((2r+1)x+(2s+1)y\bigr)=\gamma.
	\]
	Because
	\[
	(2r+1)x+(2s+1)y=2^km
	\]
	and \(m\) is odd, it follows that
	\(
	\gamma=k.
	\)
	
	Consequently, both \(x\) and \(y\) are divisible by \(2^k\), and
	the map
	\[
	\Phi:
	\mathcal A_{2^km}^{\ne}\longrightarrow\mathcal A_m,
	\qquad
	\Phi(x,y,r,s)
	=
	\left(
	\frac{x}{2^k},
	\frac{y}{2^k},
	r,s
	\right),
	\]
	is well defined. Indeed,
	\[
	m
	=
	(2r+1)\frac{x}{2^k}
	+
	(2s+1)\frac{y}{2^k},
	\]
	and division by \(2^k\) preserves the inequality \(x<y\).
	
	We now verify that \(\Phi\) is a bijection. Let
	$(u,v,r,s)\in\mathcal A_m$
	and write
	$u=2^cU$, $v=2^dV$,
	where \(U\) and \(V\) are odd. We first observe that \(c\ne d\).
	Indeed, if \(c=d\), then
	\[
	m
	=
	2^c\bigl((2r+1)U+(2s+1)V\bigr),
	\]
	and the expression inside the parentheses is even. This would imply
	that \(m\) is even, contrary to our assumption.
	
	Since \(c\ne d\), the same \(2\)-adic argument used above gives
	$v_2(m)=\min(c,d)$.
	As \(m\) is odd, we obtain
	$\min(c,d)=0$.
	It follows that
	\[
	(2^ku,2^kv,r,s)\in\mathcal A_{2^km}^{\ne}.
	\]
	Thus the inverse of \(\Phi\) is
	\[
	(u,v,r,s)
	\longmapsto
	(2^ku,2^kv,r,s).
	\]
	The bijection preserves the weight \((-1)^{r+s}\). Therefore
	\[
	\sum_{(x,y,r,s)\in\mathcal A_{2^km}^{\ne}}
	(-1)^{r+s}
	=
	\sum_{(u,v,r,s)\in\mathcal A_m}
	(-1)^{r+s}
	=
	a(m).
	\]
	
	\medskip
	\noindent
	\textit{Case 2. \(v_2(x)=v_2(y)\).}
	Suppose now that
	\(
	\alpha=\beta=t.
	\)
	Then
	$x=2^tX$, $y=2^tY$,
	where \(X<Y\) are odd. The defining equation becomes
	\[
	2^km
	=
	2^t\bigl((2r+1)X+(2s+1)Y\bigr).
	\]
	The expression inside the parentheses is a sum of two odd integers
	and is therefore even. Consequently, \(t<k\), so
	\(
	0\le t\le k-1.
	\)
	
	For a fixed value of \(t\), put
	\(
	L=k-t.
	\)
	Dividing the defining equation by \(2^t\) gives
	\[
	2^Lm=(2r+1)X+(2s+1)Y.
	\]
	Hence the map
	\[
	(x,y,r,s)
	\longmapsto
	(X,Y,r,s)
	\]
	is a weight-preserving bijection between the representations in
	\(\mathcal A_{2^km}^{=}\) with common \(2\)-adic valuation \(t\)
	and the set \(\mathcal A'_{2^Lm}\). Its inverse is
	\[
	(X,Y,r,s)
	\longmapsto
	(2^tX,2^tY,r,s).
	\]
	Therefore, the total signed contribution corresponding to the fixed
	value \(t\) is \(C_L(m)\).
	
	As \(t\) runs from \(0\) to \(k-1\), the integer \(L=k-t\) runs
	from \(k\) to \(1\). Consequently,
	\[
	\sum_{(x,y,r,s)\in\mathcal A_{2^km}^{=}}
	(-1)^{r+s}
	=
	\sum_{L=1}^{k}C_L(m).
	\]
	By Lemma~\ref{lem:odd-level-contribution},
	\[
	C_1(m)=0,
	\qquad
	C_L(m)=2^{L-2}\sigma(m)
	\quad(L\ge2).
	\]
	It follows that
	\[
	\begin{aligned}
	\sum_{(x,y,r,s)\in\mathcal A_{2^km}^{=}}
	(-1)^{r+s}
	&=
	\sum_{L=2}^{k}2^{L-2}\sigma(m)
	=
	(2^{k-1}-1)\sigma(m),
	\end{aligned}
	\]
	where the sum is understood to be empty when \(k=1\).
	
	\medskip
	Combining the contributions from the two cases, we obtain
	\[
	\begin{aligned}
	a(2^km)
	&=
	\sum_{(x,y,r,s)\in\mathcal A_{2^km}^{\ne}}
	(-1)^{r+s}
	+
	\sum_{(x,y,r,s)\in\mathcal A_{2^km}^{=}}
	(-1)^{r+s}\\
	&=
	a(m)+(2^{k-1}-1)\sigma(m).
	\end{aligned}
	\]
	This proves Theorem~\ref{thm:structural-relation}.
\end{proof}

\smallskip
\begin{remark}
	The preceding proof gives a signed partition-theoretic interpretation of the correction term
	\(
	(2^{k-1}-1)\sigma(m).
	\)
	Indeed, this term is exactly the total signed contribution of those representations of \(2^k m\) for which the two part sizes have the same \(2\)-adic valuation. In contrast, the representations for which the two part sizes have different \(2\)-adic valuations are in weight-preserving bijection with the signed representations counted by \(a(m)\).
\end{remark}
\smallskip

\smallskip
\begin{corollary}\label{Th1.1}
	Let \(m\) be an odd positive integer and let \(k\ge1\). Then
	\[
	a(2^k m)
	=
	\frac{(2^{k+1}-3)\sigma(m)-\lambda(m)}4.
	\]
\end{corollary}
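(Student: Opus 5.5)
The plan is to combine Theorem~\ref{thm:structural-relation} with formula~\eqref{eq1} evaluated at the odd argument \(m\). First, since \(m\) is odd, the numbers \(m/2\) and \(m/4\) are not positive integers. By the convention \(\sigma(x)=0\) for such \(x\), formula~\eqref{eq1} therefore collapses to
\[
a(m)=\frac{\sigma(m)-\lambda(m)}{4}.
\]

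Next I will substitute this into Theorem~\ref{thm:structural-relation}, which gives
\[
a(2^km)=\frac{\sigma(m)-\lambda(m)}{4}+(2^{k-1}-1)\sigma(m).
\]
Over the common denominator \(4\), the coefficient of \(\sigma(m)\) is
\[
1+4(2^{k-1}-1)=2^{k+1}-3,
\]
which is exactly the claimed expression.

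As a consistency check, I would also derive the formula directly from~\eqref{eq1} at \(n=2^km\), using Lemma~\ref{lem:lambda-properties}(ii) to replace \(\lambda(2^km)\) by \(\lambda(m)\). This uses \(\sigma(2^jm)=(2^{j+1}-1)\sigma(m)\) for \(j=k,k-1,k-2\), with the case \(k=1\) treated separately because then \(\sigma(2^{k-2}m)=\sigma(m/2)=0\). For \(k\ge2\) the combination is
\[
\tfrac14(2^{k+1}-1)-\tfrac12(2^k-1)+(2^{k-1}-1)=2^{k-1}-\tfrac34,
\]
which again equals \((2^{k+1}-3)/4\). For \(k=1\) the \(\sigma(n/4)\) term vanishes, and \(\tfrac34\sigma(m)-\tfrac12\sigma(m)=\tfrac14\sigma(m)\) matches \((2^2-3)/4=\tfrac14\).

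There is no real obstacle here. The only point needing care is the vanishing convention for \(\sigma\) at non-integers when \(m\) is odd, together with the boundary case \(k=1\).
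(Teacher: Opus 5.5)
Your proposal is correct and is essentially the paper's proof: evaluate \eqref{eq1} at the odd argument \(m\), so that the \(\sigma(m/2)\) and \(\sigma(m/4)\) terms vanish, and substitute the result into Theorem~\ref{thm:structural-relation}. Your consistency check computed directly from \eqref{eq1} at \(n=2^km\), including the separate treatment of \(k=1\), is the same as the verification given in the paper's remark following the corollary.
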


\begin{proof}
	Since \(m\) is odd, formula~\eqref{eq1} gives
	\[
	a(m)=\frac{\sigma(m)-\lambda(m)}4.
	\]
	By Theorem~\ref{thm:structural-relation},
	\[
	a(2^k m)
	=
	a(m)+(2^{k-1}-1)\sigma(m).
	\]
	Consequently,
	\[
	\begin{aligned}
	a(2^k m)
	&=
	\frac{\sigma(m)-\lambda(m)}4
	+(2^{k-1}-1)\sigma(m)
	=
	\frac{(2^{k+1}-3)\sigma(m)-\lambda(m)}4.
	\end{aligned}
	\]
\end{proof}

\begin{remark}
	Corollary~\ref{Th1.1} may also be verified directly from the
	arithmetic formula~\eqref{eq1}. Indeed,
	Lemma~\ref{lem:lambda-properties}(ii) gives
	\(
	\lambda(2^km)=\lambda(m).
	\)
	Moreover, since \(m\) is odd, the multiplicativity of \(\sigma\)
	gives, for every \(k\ge1\),
	\[
	\sigma(2^km)
	=
	(2^{k+1}-1)\sigma(m),
	\]
	\[
	\sigma(2^km/2)
	=
	(2^k-1)\sigma(m),
	\]
	and
	\[
	\sigma(2^km/4)
	=
	(2^{k-1}-1)\sigma(m),
	\]
	where the last identity also holds for \(k=1\), by the convention
	\(\sigma(m/2)=0\). Substitution into~\eqref{eq1} yields
	\[
	a(2^km)
	=
	\frac{(2^{k+1}-3)\sigma(m)-\lambda(m)}4.
	\]
	Theorem~\ref{thm:structural-relation}, however, gives the additional
	signed partition-theoretic interpretation of this formula.
\end{remark}

\smallskip
\begin{corollary}\label{Cor1.2}
	Let \(m\) be an odd positive integer satisfying
	\(
	m\equiv3\pmod4.
	\)
	Then, for every integer \(k\ge1\),
	\[
	a(2^k m)
	=
	\frac{2^{k+1}-3}{4}\sigma(m).
	\]
\end{corollary}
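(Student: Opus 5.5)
The plan is to specialize Corollary~\ref{Th1.1} to the case \(m\equiv3\pmod4\); the statement is then a one-line consequence. First, I take \(m\) odd with \(m\equiv3\pmod4\) and any \(k\ge1\). Corollary~\ref{Th1.1}, which rests on Theorem~\ref{thm:structural-relation} together with formula~\eqref{eq1}, gives
\[
a(2^km)=\frac{(2^{k+1}-3)\sigma(m)-\lambda(m)}{4}.
\]
Second, I invoke Lemma~\ref{lem:lambda-properties}(iii), which says that \(\lambda(m)=0\) whenever \(m\equiv3\pmod4\). Substituting this into the display leaves
\[
a(2^km)=\frac{2^{k+1}-3}{4}\,\sigma(m),
\]
which is exactly the claimed identity.

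There is no genuine obstacle here. The only point worth a remark is that the right-hand side should be an integer. This is automatic, because the left-hand side is an integer by definition. It can also be checked directly: since \(m\equiv3\pmod4\), \(m\) has a prime \(p\equiv3\pmod4\) to an odd power. The corresponding factor \(1+p+\cdots+p^{\alpha}\) of \(\sigma(m)\) is then a sum of an even number of odd terms, paired as \(p^{2i}+p^{2i+1}\). Each such pair is \(p^{2i}(1+p)\), which is divisible by \(4\). So \(4\mid\sigma(m)\), consistent with the formula. This check is not needed for the proof itself.
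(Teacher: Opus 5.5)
Your proof is correct and is exactly the paper's argument: specialize Corollary~\ref{Th1.1} and use Lemma~\ref{lem:lambda-properties}(iii) to get \(\lambda(m)=0\). Your side check that \(4\mid\sigma(m)\) is also correct but not needed.
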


\begin{proof}
	By Corollary~\ref{Th1.1} and
	Lemma~\ref{lem:lambda-properties}(iii), we have
	\[
	a(2^k m)
	=
	\frac{(2^{k+1}-3)\sigma(m)-\lambda(m)}4
	=
	\frac{2^{k+1}-3}{4}\sigma(m).
	\]
\end{proof}

\section{Congruences on dyadic progressions}
\label{S4}

We now derive an infinite family of congruences on arithmetic progressions
obtained by fixing the odd part of the argument and varying its \(2\)-adic
valuation. The key point is that, for integers \(m\equiv3\pmod4\),
Corollary~\ref{Cor1.2} expresses \(a(2^k m)\) as a constant multiple of
\(\sigma(m)\), with the constant depending only on \(k\).

\smallskip
\begin{proof}[\textbf{Proof of Corollary~\ref{Th1.3}}]
	By Corollary~\ref{Cor1.2}, we have
	$4a(2^k m)=(2^{k+1}-3)\sigma(m)$.
	Hence
	$4a(2^k m)\equiv0\pmod{2^{k+1}-3}$.
	Since \(2^{k+1}-3\) is odd, \(4\) is invertible modulo
	\(2^{k+1}-3\). Therefore 
	$a(2^k m)\equiv0\pmod{2^{k+1}-3}$.
\end{proof}

\smallskip
\begin{corollary}\label{Cor1.4}
	For every integer \(n\ge0\),
	\begin{enumerate}
		\item
		\(
		a(16n+12)\equiv0\pmod5;
		\)
		\item
		\(
		a(32n+24)\equiv0\pmod{13};
		\)
		\item
		\(
		a(64n+48)\equiv0\pmod{29};
		\)
		\item
		\(
		a(128n+96)\equiv0\pmod{61}.
		\)
	\end{enumerate}
\end{corollary}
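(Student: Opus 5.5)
The plan is to obtain all four congruences as direct specializations of Corollary~\ref{Th1.3}, with \(m=4n+3\).

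First observe that for every integer \(n\ge0\) the integer \(m=4n+3\) is odd and satisfies \(m\equiv3\pmod4\). So the hypotheses of Corollary~\ref{Th1.3} hold for every \(k\ge2\).

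Next take \(k=2,3,4,5\) in turn and compute both the argument and the modulus.

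\begin{itemize}
\item For \(k=2\): \(2^2(4n+3)=16n+12\) and \(2^{3}-3=5\).
\item For \(k=3\): \(2^3(4n+3)=32n+24\) and \(2^4-3=13\).
\item For \(k=4\): \(2^4(4n+3)=64n+48\) and \(2^5-3=29\).
\item For \(k=5\): \(2^5(4n+3)=128n+96\) and \(2^6-3=61\).
\end{itemize}

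Substituting these into Corollary~\ref{Th1.3} gives items 1 through 4 in that order. If one prefers to bypass Corollary~\ref{Th1.3}, each item can also be read off from Corollary~\ref{Cor1.2}: there \(4a(2^km)=(2^{k+1}-3)\sigma(m)\), and \(4\) is invertible modulo the odd number \(2^{k+1}-3\).

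There is no real obstacle here. The only points to check are the arithmetic \(2^k(4n+3)=2^{k+2}n+3\cdot2^k\) and that the excluded case \(k=1\) (modulus \(1\)) is not needed.
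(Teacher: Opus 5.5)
Your proposal is correct and matches the paper's proof: the paper also applies Corollary~\ref{Th1.3} with \(m=4n+3\) and \(k=2,3,4,5\), which gives the moduli \(5,13,29,61\). Your alternative route via Corollary~\ref{Cor1.2} and the invertibility of \(4\) is just the argument the paper uses to prove Corollary~\ref{Th1.3} itself.
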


\begin{proof}
	We apply Corollary~\ref{Th1.3} with
	\(
	m=4n+3.
	\)
	For \(k=2,3,4,5\), we have respectively
	\[
	2^{k+1}-3=5,\ 13,\ 29,\ 61.
	\]
	Thus Corollary~\ref{Th1.3} gives
	\[
	a(2^2(4n+3))\equiv0\pmod5,
	\]
	\[
	a(2^3(4n+3))\equiv0\pmod{13},
	\]
	\[
	a(2^4(4n+3))\equiv0\pmod{29},
	\]
	and
	\[
	a(2^5(4n+3))\equiv0\pmod{61}.
	\]
	Therefore the desired congruences follow.
\end{proof}

Corollary~\ref{Th1.3} shows that the congruence properties of the
two-block odd partition function along the progressions
\(
2^k(4n+3)
\)
are governed by the divisors of the integers
\(
2^{k+1}-3.
\)
Consequently, Corollary~\ref{Th1.3} immediately yields congruences modulo every positive divisor of 
$2^{k+1}-3$. In particular, whenever a positive divisor $\ell$ of $2^{k+1}-3$ is fixed, one obtains
$a\big(2^k(4n+3)\big)\equiv 0 \pmod \ell$,
for every $n\ge 0$.

\section{Congruences modulo {3}}
\label{S5}

In this section we prove the congruences modulo \(3\) announced in
Corollaries~\ref{thm:general-mod3-first} and
\ref{thm:general-mod3-second}. We begin with an elementary congruence for the
divisor-sum function.

\smallskip
\begin{lemma}\label{lem:sigma3}
	Let \(n\) be a positive integer satisfying
	\(
	n\equiv2\pmod3.
	\)	Then
	\[
	\sigma(n)\equiv0\pmod3.
	\]
\end{lemma}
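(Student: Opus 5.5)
The natural route is the classical divisor-pairing argument. Since \(n\equiv2\pmod3\), \(n\) is not a perfect square: every square is congruent to \(0\) or \(1\) modulo \(3\). Hence the divisors of \(n\) split into disjoint unordered pairs \(\{d,n/d\}\) with \(d\ne n/d\). I will therefore write
\[
\sigma(n)=\sum_{\substack{d\mid n\\ d<\sqrt n}}\left(d+\frac nd\right),
\]
and show that each summand \(d+n/d\) is divisible by \(3\).

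Fix a divisor \(d\) of \(n\). Then \(3\nmid d\), since otherwise \(3\mid n\), contradicting \(n\equiv2\pmod3\). So \(d\equiv\pm1\pmod3\), and thus \(d^2\equiv1\pmod3\). Next, \(d\cdot(n/d)=n\equiv2\pmod3\). Multiplying by \(d\) gives
\[
n/d\equiv d^2\cdot\frac nd = d\cdot n\equiv 2d\equiv -d\pmod3.
\]
Therefore \(d+n/d\equiv0\pmod3\). Summing over the pairs gives \(\sigma(n)\equiv0\pmod3\).

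An alternative is to use multiplicativity of \(\sigma\). Some prime \(p\equiv2\pmod3\) must divide \(n\) to an odd exponent \(\alpha\), and then \(1+p+\cdots+p^\alpha\equiv1-1+\cdots-1=0\pmod3\). This mirrors the proof of Lemma~\ref{lem:lambda-properties}(iii).

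There is no real obstacle in either approach. The only points needing care are excluding the square case in the pairing argument, or, in the multiplicative approach, justifying that a prime \(\equiv2\pmod3\) occurs to an odd power. The latter holds because otherwise \(n\) would be a product of a square and primes \(\equiv1\pmod3\), forcing \(n\equiv0\) or \(1\pmod3\). I would present the pairing argument as the main proof.
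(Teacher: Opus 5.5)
Your proof is correct, but your main argument takes a different route from the paper. The paper's proof is the multiplicative alternative you sketch at the end. Since $n\equiv2\pmod3$, some prime $p\equiv2\pmod3$ divides $n$ to an odd exponent $\alpha$, and then the factor $\sigma(p^\alpha)=1+p+\cdots+p^\alpha\equiv1-1+\cdots-1\equiv0\pmod3$ of $\sigma(n)$ does the job.

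Your pairing argument needs neither the prime factorization of $n$ nor the multiplicativity of $\sigma$. It uses only two facts: $n$ is not a square, and every divisor $d$ is a unit modulo $3$ with $d^2\equiv1$. Hence $n/d\equiv-d$, and each pair $\{d,n/d\}$ contributes $0$ modulo $3$.

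The payoff of your route is portability. The same lines show $\sigma(n)\equiv0\pmod M$ whenever $n\equiv-1\pmod M$ and $M\ge3$ divides $24$, for instance the classical $\sigma(24k+23)\equiv0\pmod{24}$. The multiplicative route would need more bookkeeping for that.

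The paper's route instead places the factor $3$ in an explicit Euler factor. Since $(p+1)\mid\sigma(p^\alpha)$ for odd $\alpha$, it even gives $3^t\mid\sigma(n)$ when $t$ distinct primes $\equiv2\pmod3$ occur to odd exponents.
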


\begin{proof}
	Write
	\[
	n=\prod_{i=1}^{r}p_i^{\alpha_i}.
	\]
	Since
	\(
	n\equiv2\pmod3,
	\)
	the total exponent of the prime factors congruent to
	\(2\pmod3\) is odd. Hence there exists a prime
	\(p\equiv2\pmod3\) such that
	$p^\alpha\parallel n$,
	where \(\alpha\) is odd.
	
	Since the divisor-sum function is multiplicative,
	\[
	\sigma(n)
	=
	\prod_{i=1}^{r}\sigma(p_i^{\alpha_i}),
	\]
	it is sufficient to prove that
	\(
	\sigma(p^\alpha)\equiv0\pmod3.
	\)
	
	Because
	\(
	p\equiv-1\pmod3,
	\)
	we have
	\[
	\begin{aligned}
	\sigma(p^\alpha)
	&=
	1+p+\cdots+p^\alpha\\
	&\equiv
	1-1+\cdots-1
	\pmod3.
	\end{aligned}
	\]
	Since \(\alpha\) is odd, the above alternating sum contains an even number
	of terms and is therefore congruent to \(0\) modulo \(3\). Hence
	\(
	\sigma(n)\equiv0\pmod3.
	\)
\end{proof}

\begin{proof}[\textbf{Proof of
		Corollary~\ref{thm:general-mod3-first}}]
	Put
	$u=12n+11$,
	$m=3^\beta u$.
	Since
	\(
	u\equiv2\pmod3,
	\)
	we have
	\(
	(3^\beta,u)=1.
	\)
	Moreover,
	\(
	u\equiv3\pmod4,
	\)
	so Lemma~\ref{lem:lambda-properties}(iii) gives
	\(
	\lambda(u)=0.
	\)
	By Lemma~\ref{lem:lambda-properties}(i),
	\[
	\lambda(m)
	=
	\lambda(3^\beta)\lambda(u)
	=
	0.
	\]
	Since \(u\equiv2\pmod3\), Lemma~\ref{lem:sigma3} gives
	\(
	\sigma(u)\equiv0\pmod3.
	\)
	The multiplicativity of \(\sigma\) therefore yields
	\[
	\sigma(m)
	=
	\sigma(3^\beta)\sigma(u)
	\equiv0\pmod3.
	\]
	If \(\alpha=0\), formula~\eqref{eq1} gives
	\[
	a(m)
	=
	\frac{\sigma(m)-\lambda(m)}4
	\equiv0\pmod3.
	\]
	Assume now that \(\alpha\ge1\). By
	Corollary~\ref{Th1.1},
	\[
	a(2^\alpha m)
	=
	\frac{(2^{\alpha+1}-3)\sigma(m)-\lambda(m)}4.
	\]
	Since
	$\sigma(m)\equiv0\pmod3$
	and
	$\lambda(m)=0$,
	we conclude that
	\(
	a(2^\alpha m)\equiv0\pmod3.
	\)
	This completes the proof.
\end{proof}

\begin{proof}[\textbf{Proof of
		Corollary~\ref{thm:general-mod3-second}}]
	Put
	$u=6n+5$, $m=3u=18n+15$.
	Then \(u\) and \(m\) are odd. Moreover,
	$u\equiv2\pmod3$,
	so
	$(3,u)=1$.
	By Lemma~\ref{lem:sigma3},
	$\sigma(u)\equiv0\pmod3$.
	The multiplicativity of \(\sigma\) therefore gives
	\[
	\sigma(m)
	=
	\sigma(3)\sigma(u)
	=
	4\sigma(u)
	\equiv0\pmod3.
	\]
	On the other hand,
	\[
	\lambda(3)
	=
	\chi_4(1)+\chi_4(3)
	=
	1-1
	=
	0.
	\]
	Since \((3,u)=1\), Lemma~\ref{lem:lambda-properties}(i) gives
	\[
	\lambda(m)
	=
	\lambda(3)\lambda(u)
	=
	0.
	\]
	If \(\alpha=0\), formula~\eqref{eq1} gives
	\[
	a(m)
	=
	\frac{\sigma(m)-\lambda(m)}4
	\equiv0\pmod3.
	\]
	Assume now that \(\alpha\ge1\). By
	Corollary~\ref{Th1.1},
	\[
	a(2^\alpha m)
	=
	\frac{(2^{\alpha+1}-3)\sigma(m)-\lambda(m)}4.
	\]
	Since
	$\sigma(m)\equiv0\pmod3$
	and
	$\lambda(m)=0$,
	and \(4\) is invertible modulo \(3\), we obtain
	$a(2^\alpha m)\equiv0\pmod3$.
	This completes the proof.
\end{proof}

Both congruences are direct consequences of the explicit arithmetic
formula for \(a(n)\), combined with elementary properties of
\(\sigma(n)\) and \(\lambda(n)\). In particular, the
congruence
\[
a(12n+11)\equiv0\pmod3,
\]
previously established in \cite{Merca} by considerably more involved
arguments, is seen to be an immediate consequence of the explicit formula
\eqref{eq1}.

\section{Weighted congruences involving
	\(\operatorname{pod}(n)\)}
\label{S6}

We now apply the arithmetic properties of \(a(n)\) to weighted sums
involving \(\operatorname{pod}(n)\). Recall from \cite{Merca} the
factorization
\[
A(q)=P(q)S(q),
\]
where
\[
P(q)
=
\sum_{n\ge0}\operatorname{pod}(n)q^n
\]
and
\[
S(q)
=
\sum_{r=1}^{\infty}
(-1)^{T_{r-1}}
T_{\lceil r/2\rceil}
q^{T_{r+1}},
\qquad
T_j:=\frac{j(j+1)}2
\quad(j\in\mathbb Z).
\]

We shall use the symmetry
$T_j=T_{-j-1}$.
As in the Introduction, we adopt the convention
\[
\operatorname{pod}(n)=0
\qquad(n<0).
\]
Consequently, all bilateral sums occurring below are finite for each
fixed coefficient.

\subsection{A general reduction modulo odd primes}

We first establish the periodicity modulo \(p\) of the coefficients
appearing in the auxiliary series \(S(q)\).

\begin{lemma}\label{lem:general-periodicity}
	Let \(p\) be an odd prime and define
	\[
	\varepsilon_r
	=
	(-1)^{T_{r-1}}
	T_{\lceil r/2\rceil},
	\qquad r\ge1.
	\]
	Then
	\[
	\varepsilon_{r+4p}
	\equiv
	\varepsilon_r
	\pmod p
	\]
	for every \(r\ge1\).
\end{lemma}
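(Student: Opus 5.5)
The plan is to treat the two factors of \(\varepsilon_r=(-1)^{T_{r-1}}T_{\lceil r/2\rceil}\) separately. We show that the sign factor is periodic in \(r\) with period \(4\), and that the triangular-number factor is periodic modulo \(p\) with period \(4p\), after the shift \(r\mapsto r+4p\). Their product is then periodic modulo \(p\) with period \(4p\).

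\emph{Sign factor.} We have
\[
T_{r-1+4p}-T_{r-1}=\frac{(r-1+4p)(r+4p)-(r-1)r}{2}=\frac{4p(2r-1)+16p^2}{2}=2p(2r-1)+8p^2,
\]
which is even. Hence \((-1)^{T_{r+4p-1}}=(-1)^{T_{r-1}}\), and this holds exactly as integers, not merely modulo \(p\).

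\emph{Triangular factor.} Since \(4p\) is even, \(\lceil (r+4p)/2\rceil=\lceil r/2\rceil+2p\). Put \(j=\lceil r/2\rceil\). Then
\[
T_{j+2p}-T_j=\frac{(j+2p)(j+2p+1)-j(j+1)}{2}=\frac{4pj+4p^2+2p}{2}=p(2j+2p+1),
\]
so \(T_{j+2p}\equiv T_j\pmod p\). Here the oddness of \(p\) is not even needed; the division by \(2\) is legitimate because the numerator is \(2p(2j+2p+1)\).

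\emph{Conclusion.} Multiplying the two facts gives \(\varepsilon_{r+4p}\equiv\varepsilon_r\pmod p\) for all \(r\ge1\). No step here is a genuine obstacle. The only point needing care is the ceiling shift, which is handled by the evenness of \(4p\).
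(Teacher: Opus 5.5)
Your proof is correct and matches the paper's argument: the sign $(-1)^{T_{r-1}}$ is unchanged because $T_{r+4p-1}-T_{r-1}$ is even, $\lceil (r+4p)/2\rceil=\lceil r/2\rceil+2p$, and $T_{j+2p}-T_j=p(2j+2p+1)\equiv 0\pmod p$. Your value $T_{r+4p-1}-T_{r-1}=2p(2r+4p-1)$ is the right one; the paper's displayed $2p(2r+2p-1)$ is a slip, harmless because only evenness is used.
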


\begin{proof}
	We first compare the signs. Since
	$T_{r+4p-1}-T_{r-1}	= 	2p(2r+2p-1)$,
	the difference is even. Hence
	$(-1)^{T_{r+4p-1}}=(-1)^{T_{r-1}}$.
	Next,
	$\lceil(r+4p)/2\rceil = \lceil r/2 \rceil+2p$. 
	Put
	$m=\lceil r/2 \rceil$.
	Then
	\[
	T_{m+2p}-T_m
	=
	2pm+p(2p+1),
	\]
	which is divisible by \(p\). Therefore
	\[
	T_{m+2p}\equiv T_m\pmod p.
	\]
	
	Combining these two congruences gives
	\[
	\begin{aligned}
	\varepsilon_{r+4p}
	&=
	(-1)^{T_{r+4p-1}}
	T_{\lceil(r+4p)/2\rceil}\\
	&\equiv
	(-1)^{T_{r-1}}
	T_{\lceil r/2\rceil}\\
	&=
	\varepsilon_r
	\pmod p.
	\end{aligned}
	\]
\end{proof}

The preceding periodicity allows us to identify the vanishing residue
classes and to pair the remaining classes.

\begin{lemma}\label{lem:general-reduction}
	Let \(p\) be an odd prime. Then, as a coefficientwise congruence of
	formal power series,
	\[
	S(q)
	\equiv
	\sum_{s=1}^{2p-4}
	\varepsilon_s
	\sum_{k\in\mathbb Z}
	q^{T_{4pk+s+1}}
	\pmod p,
	\]
	where
	$\varepsilon_s = (-1)^{T_{s-1}} T_{\lceil s/2\rceil}$.
\end{lemma}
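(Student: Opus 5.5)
The plan is to expand \(S(q)\) by residue classes of the summation index modulo \(4p\). Then I discard the classes whose coefficients vanish modulo \(p\), and fold the remaining upper half of the classes onto the lower half using the symmetry \(T_j=T_{-j-1}\). Each fixed coefficient involves only finitely many terms, so all manipulations are coefficientwise.

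\emph{Step 1 (grouping).} Write every \(r\ge1\) uniquely as \(r=4pk+s\) with \(k\ge0\) and \(1\le s\le 4p\). By Lemma~\ref{lem:general-periodicity}, \(\varepsilon_r\equiv\varepsilon_s\pmod p\). Hence
\[
S(q)\equiv\sum_{s=1}^{4p}\varepsilon_s\sum_{k\ge0}q^{T_{4pk+s+1}}\pmod p .
\]

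\emph{Step 2 (vanishing classes).} Since \(p\) is odd, \(T_{m+p}-T_m=pm+p(p+1)/2\equiv0\pmod p\). Also \(T_m\equiv0\pmod p\) exactly when \(m\equiv0\) or \(m\equiv-1\pmod p\). As \(s\) runs over \(1,\dots,4p\), the index \(\lceil s/2\rceil\) runs over \(1,\dots,2p\). The values \(p-1,\,p,\,2p-1,\,2p\) give \(\varepsilon_s\equiv0\). These correspond to \(s\in\{2p-3,2p-2,2p-1,2p\}\) and \(s\in\{4p-3,4p-2,4p-1,4p\}\). The surviving classes are therefore \(1\le s\le 2p-4\) and \(2p+1\le s'\le 4p-4\).

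\emph{Step 3 (pairing).} For \(1\le s\le 2p-4\), set \(s'=4p-3-s\); this is a bijection onto \(\{2p+1,\dots,4p-4\}\). By \(T_j=T_{-j-1}\),
\[
T_{4pk+s'+1}=T_{4p(k+1)-(s+2)}=T_{-4p(k+1)+s+1},
\]
so the terms with \(k\ge0\) of class \(s'\) become the terms with index \(k'=-(k+1)\le-1\) of class \(s\). Together with Step 1, this produces the bilateral sum over \(k\in\mathbb Z\), provided that \(\varepsilon_{s'}\equiv\varepsilon_s\pmod p\).

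\emph{Step 4 (checking the coefficients).} This is the only place needing care. Write \(a=\lceil s/2\rceil\). In both parities of \(s\), one finds \(\lceil s'/2\rceil=2p-1-a\). Using period \(p\) and the symmetry, \(T_{2p-1-a}\equiv T_{-1-a}=T_a\pmod p\). For the sign, \(T_{s'-1}=T_{4p-4-s}=T_{s+3-4p}\). The parity of \(T_j\) depends only on \(j\bmod 4\) (the pattern is even, odd, odd, even). Since \(s+3-4p\equiv s-1\pmod 4\), the signs agree. Thus \(\varepsilon_{s'}\equiv\varepsilon_s\pmod p\), and the stated congruence follows.
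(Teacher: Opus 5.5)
Your proposal is correct and follows essentially the same route as the paper: you group the terms by residue classes modulo $4p$, discard the eight classes with $\lceil s/2\rceil\in\{p-1,p,2p-1,2p\}$, pair $s\leftrightarrow 4p-3-s$, and merge the two halves into a bilateral sum via $T_j=T_{-j-1}$. The differences are only cosmetic. You check the sign through the $4$-periodicity of the parity of $T_j$, whereas the paper computes $T_{s^*-1}-T_{s-1}=2(p-1)(4p-2s-3)$ directly. You also justify $T_{2p-1-a}\equiv T_a\pmod p$ via $p$-periodicity on $\mathbb Z$ plus the reflection, which is more explicit than the paper's one-line assertion.
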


\begin{proof}
	By Lemma~\ref{lem:general-periodicity}, the coefficients
	\(\varepsilon_r\) are periodic modulo \(p\), with period dividing
	\(4p\). We first determine the residue classes for which
	\(\varepsilon_r\) vanishes modulo \(p\).
	
	For \(1\le r\le4p\), put
	$m=\lceil r/2 \rceil$.
	Then
	$\varepsilon_r = (-1)^{T_{r-1}}T_m$,
	and therefore
	$$\varepsilon_r\equiv0\pmod p \iff
	T_m\equiv0\pmod p.$$
	Since \(p\) is odd and prime, this is equivalent to
	$m\equiv0,-1\pmod p$.
	
	Because \(1\le m\le2p\), the possible values of \(m\) are
	\[
	m=p-1,\quad p,\quad 2p-1,\quad 2p.
	\]
	Consequently, the vanishing residue classes of \(r\) modulo \(4p\)
	are
	\[
	2p-3,\quad 2p-2,\quad 2p-1,\quad 2p,
	\]
	and
	\[
	4p-3,\quad 4p-2,\quad 4p-1,\quad 0.
	\]
	
	It remains to pair the nonvanishing residue classes. For
	$1\le s\le2p-4$,
	put
	$s^{*}=4p-s-3$.
	We claim that
	$\varepsilon_{s^{*}} \equiv \varepsilon_s \pmod p$.
	
	Let
	$m=\lceil s/2 \rceil$.
	A direct calculation, according as \(s\) is odd or even, gives
	\[
	\lceil s^{*}/2 \rceil
	=
	2p-m-1.
	\]
	It follows that
	\[
	T_{\lceil s^{*}/2\rceil}
	=
	T_{2p-m-1}
	\equiv
	T_m
	\pmod p.
	\]
	
	Moreover,
	\[
	\begin{aligned}
	T_{s^{*}-1}-T_{s-1}
	&=
	T_{4p-s-4}-T_{s-1}
	=
	2(p-1)(4p-2s-3),
	\end{aligned}
	\]
	which is even. Hence
	$ (-1)^{T_{s^{*}-1}} = (-1)^{T_{s-1}}$,
	and therefore
	$ \varepsilon_{s^{*}} \equiv \varepsilon_s \pmod p$.
	
	Using the periodicity of \(\varepsilon_r\), the vanishing residue
	classes found above, and the pairing
	$s\longleftrightarrow4p-s-3$,
	we obtain
	\[
	\begin{aligned}
	S(q)
	\equiv
	\sum_{s=1}^{2p-4}
	\varepsilon_s
	\left(
	\sum_{k\ge0}q^{T_{4pk+s+1}}
	+
	\sum_{k\ge0}q^{T_{4pk+4p-s-2}}
	\right)
	\pmod p.
	\end{aligned}
	\]
	
	Finally, the identity
	\(
	T_j=T_{-j-1}
	\)
	gives
	\(
	T_{4pk+4p-s-2}
	=
	T_{-4p(k+1)+s+1}.
	\)
	After replacing \(k\) by \(-k-1\) in the second unilateral sum, the
	two sums combine into
	\[
	\sum_{k\in\mathbb Z}q^{T_{4pk+s+1}}.
	\]
	Consequently,
	\[
	S(q)
	\equiv
	\sum_{s=1}^{2p-4}
	\varepsilon_s
	\sum_{k\in\mathbb Z}
	q^{T_{4pk+s+1}}
	\pmod p.
	\]
\end{proof}

\subsection{An explicit transfer principle}

We now prove the explicit transfer principle stated in the
Introduction. Recall the notation \(\mathcal W_p(N)\) introduced
there for the corresponding theta-weighted
\(\operatorname{pod}\)-sum.

\begin{proof}[\textbf{Proof of
		Theorem~\ref{thm:explicit-transfer}}]
	By Lemma~\ref{lem:general-reduction} and the factorization
	\(A(q)=P(q)S(q)\), we have
	\[
	A(q)
	\equiv
	P(q)
	\sum_{s=1}^{2p-4}
	\varepsilon_s
	\sum_{j\in\mathbb Z}
	q^{T_{4pj+s+1}}
	\pmod p.
	\]
	Comparing the coefficients of \(q^N\) gives
	\[
	\mathcal W_p(N)\equiv a(N)\pmod p.
	\]
	Formula~\eqref{eq1} therefore yields
	\[
	\mathcal W_p(N)
	\equiv
	\frac{\sigma(N)-\lambda(N)}4
	-\frac{\sigma(N/2)}2
	+\sigma(N/4)
	\pmod p,
	\]
	which proves the first assertion.
	
	Now let
	\(
	N=2^\kappa m,
	\)
	where \(m\) is odd and \(\kappa\ge1\). By
	Corollary~\ref{Th1.1},
	\[
	a(2^\kappa m)
	=
	\frac{
		(2^{\kappa+1}-3)\sigma(m)-\lambda(m)
	}{4}.
	\]
	Combining this identity with
	$$\mathcal W_p(2^\kappa m) \equiv a(2^\kappa m) \pmod p$$
	proves the dyadic specialization.
\end{proof}

The dyadic congruence stated in the Introduction is now immediate.

\begin{proof}[\textbf{Proof of
		Corollary~\ref{cor:general-pod-dyadic}}]
	Put
	$m=4n+3$.
	Then
	$m\equiv3\pmod4$,
	so Lemma~\ref{lem:lambda-properties}(iii) gives
	$\lambda(m)=0$.
	The dyadic specialization of
	Theorem~\ref{thm:explicit-transfer} therefore yields
	\[
	\mathcal W_p\!\left(2^\kappa(4n+3)\right)
	\equiv
	\frac{
		(2^{\kappa+1}-3)\sigma(4n+3)
	}{4}
	\pmod p.
	\]
	By assumption,
	$2^{\kappa+1}-3\equiv0\pmod p$.
	Since \(4\) is invertible modulo \(p\), it follows that
	\[
	\mathcal W_p\!\left(2^\kappa(4n+3)\right)
	\equiv0\pmod p.
	\]
	By the definition of \(\mathcal W_p(N)\), this is precisely the
	asserted congruence.
\end{proof}

The congruence
$2^{\kappa+1}\equiv3\pmod3$
has no solutions. We therefore treat \(p=3\) separately, using a more
compact form of the reduction in
Lemma~\ref{lem:general-reduction}.

\subsection{The case \(p=3\)}

Applying Lemma~\ref{lem:general-reduction} with \(p=3\), we obtain
\[
S(q)
\equiv
\sum_{k\in\mathbb Z}q^{T_{12k+2}}
-
\sum_{k\in\mathbb Z}q^{T_{12k+3}}
\pmod3.
\]
This reduction can be written in the more compact form used in
\cite{Merca}. Indeed, separating the even and odd values of the
summation index gives
\[
\begin{aligned}
\sum_{k\in\mathbb Z}
(-1)^k q^{18k^2+15k+3}
&=
\sum_{j\in\mathbb Z}q^{T_{12j+2}}
-
\sum_{j\in\mathbb Z}q^{T_{12j+8}}
=
\sum_{j\in\mathbb Z}q^{T_{12j+2}}
-
\sum_{j\in\mathbb Z}q^{T_{12j+3}},
\end{aligned}
\]
where the last equality follows from
\[
T_{12j+8}
=
T_{-12j-9}
=
T_{12(-j-1)+3},
\]
followed by the change of index \(j\mapsto-j-1\).
Consequently,
\[
S(q)
\equiv
\sum_{k\in\mathbb Z}
(-1)^k q^{18k^2+15k+3}
\pmod3.
\]

Combining this congruence with the factorization
\(
A(q)=P(q)S(q),
\)
we obtain
\[
A(q)
\equiv
\left(
\sum_{n\ge0}
\operatorname{pod}(n)q^n
\right)
\left(
\sum_{k\in\mathbb Z}
(-1)^k
q^{18k^2+15k+3}
\right)
\pmod3.
\]
Equating coefficients of \(q^N\) yields
\[
a(N)
\equiv
\sum_{k\in\mathbb Z}
(-1)^k
\operatorname{pod}
\!\left(
N-18k^2-15k-3
\right)
\pmod3.
\]

Applying Corollaries~\ref{thm:general-mod3-first} and \ref{thm:general-mod3-second}, we obtain the following congruences for
\(\operatorname{pod}(n)\).

\smallskip
\begin{theorem}\label{thm:pod3}
	For all integers \(\alpha,\beta,n\ge0\), we have
	\[
	\sum_{k\in\mathbb Z}
	(-1)^k
	\operatorname{pod}
	\!\left(
	2^\alpha3^\beta(12n+11)
	-18k^2-15k-3
	\right)
	\equiv0
	\pmod3.
	\]
	Moreover, for all integers \(\alpha,n\ge0\),
	\[
	\sum_{k\in\mathbb Z}
	(-1)^k
	\operatorname{pod}
	\!\left(
	2^\alpha(18n+15)
	-18k^2-15k-3
	\right)
	\equiv0
	\pmod3.
	\]
\end{theorem}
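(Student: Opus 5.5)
The proof will be a direct substitution into the mod-$3$ transfer congruence derived just before the statement,
\[
a(N)\equiv\sum_{k\in\mathbb Z}(-1)^k\operatorname{pod}\!\left(N-18k^2-15k-3\right)\pmod 3,
\]
which holds for every $N\ge1$. That congruence itself comes from Lemma~\ref{lem:general-reduction} with $p=3$, rewritten in the compact bilateral form via $T_j=T_{-j-1}$, and then multiplied by $P(q)$ using the factorization $A(q)=P(q)S(q)$. Once it is in hand, each of the two asserted families reduces to a vanishing statement for $a(N)$ modulo $3$ on the corresponding set of arguments.

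For the first family, I take $N=2^\alpha3^\beta(12n+11)$. This is a positive integer, so the transfer congruence applies, and Corollary~\ref{thm:general-mod3-first} gives $a(N)\equiv0\pmod3$ for all $\alpha,\beta,n\ge0$. The left-hand side of the first displayed congruence is therefore $\equiv a(N)\equiv0\pmod3$.

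For the second family, I take $N=2^\alpha(18n+15)$ and argue the same way, with Corollary~\ref{thm:general-mod3-second} supplying $a(N)\equiv0\pmod3$.

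I do not expect a genuine obstacle here. The only points to check are bookkeeping:
\begin{itemize}
\item the bilateral sums are finite, because $\operatorname{pod}(n)=0$ for $n<0$ and $18k^2+15k+3\to\infty$ as $|k|\to\infty$;
\item the coefficient comparison in $A(q)\equiv P(q)\sum_k(-1)^kq^{18k^2+15k+3}$ is legitimate as a coefficientwise congruence of power series with integer coefficients, since both $P(q)$ and the theta series have integer coefficients.
\end{itemize}
If anything deserves a second look, it is the even/odd splitting identity $18k^2+15k+3=T_{12j+2}$ for $k=2j$ and $=T_{12j+8}$ for $k=2j+1$, which underlies the compact form. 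I would verify it quickly by expanding: $T_{12j+2}=72j^2+30j+3$, and substituting $k=2j$ gives the same value; the odd case is similar.
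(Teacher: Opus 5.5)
Your proposal is correct and matches the paper's proof: substitute \(N=2^\alpha3^\beta(12n+11)\) (resp.\ \(N=2^\alpha(18n+15)\)) into the mod-\(3\) coefficient congruence \(a(N)\equiv\sum_{k}(-1)^k\operatorname{pod}(N-18k^2-15k-3)\pmod 3\), then invoke Corollary~\ref{thm:general-mod3-first} (resp.\ Corollary~\ref{thm:general-mod3-second}). Your side checks are also accurate: the bilateral sums are finite, and \(18k^2+15k+3\) equals \(T_{12j+2}\) for \(k=2j\) and \(T_{12j+8}\) for \(k=2j+1\).
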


\begin{proof}
	Taking
	$N=2^\alpha3^\beta(12n+11)$
	in the preceding coefficient identity and applying
	Corollary~\ref{thm:general-mod3-first} proves the first congruence.
	Similarly, taking
	$N=2^\alpha(18n+15)$
	and applying Corollary~\ref{thm:general-mod3-second} proves the second
	congruence.
\end{proof}

The congruences previously obtained by taking
\(N=12n+11\) and \(N=18n+15\) are the initial cases of
the two families in Theorem~\ref{thm:pod3}.

\subsection{The case \(p=5\)}

Applying Lemma~\ref{lem:general-reduction} with \(p=5\), we have
\(2p-4=6\). Reducing the coefficients
\[
\varepsilon_s
=
(-1)^{T_{s-1}}T_{\lceil s/2\rceil},
\qquad 1\le s\le6,
\]
modulo \(5\), gives
\[
(\varepsilon_1,\ldots,\varepsilon_6)
\equiv
(1,-1,2,-2,1,-1)
\pmod5.
\]
Hence, putting
\[
(c_1,\ldots,c_6)=(1,-1,2,-2,1,-1),
\]
we obtain
\[
S(q)
\equiv
\sum_{s=1}^{6}
c_s
\sum_{k\in\mathbb Z}
q^{T_{20k+s+1}}
\pmod5.
\]

The congruence
$2^{\kappa+1}\equiv3\pmod5$
holds precisely when
$\kappa\equiv2\pmod4$.
Therefore Corollary~\ref{cor:general-pod-dyadic} gives the following
infinite family.

\smallskip
\begin{theorem}\label{thm:pod5}
	For all integers \(j,n\ge0\), we have
	\[
	\sum_{s=1}^{6}
	\sum_{\ell\in\mathbb Z}
	c_s
	\operatorname{pod}
	\!\left(
	2^{4j+2}(4n+3)-T_{20\ell+s+1}
	\right)
	\equiv0
	\pmod5,
	\]
	where
	\[
	(c_1,c_2,c_3,c_4,c_5,c_6)
	=
	(1,-1,2,-2,1,-1).
	\]
\end{theorem}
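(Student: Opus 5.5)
The proof will be a direct specialization of Corollary~\ref{cor:general-pod-dyadic} to \(p=5\). Three checks are needed: the weights \(\varepsilon_s\) may be replaced by the integers \(c_s\); the exponents \(\kappa=4j+2\) satisfy the hypothesis \(2^{\kappa+1}\equiv3\pmod5\); and the order of summation may be interchanged.

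First, I would confirm the reduction of the weights. From \(\varepsilon_s=(-1)^{T_{s-1}}T_{\lceil s/2\rceil}\) for \(1\le s\le 6\):
\begin{itemize}
\item the parities of \(T_0,\dots,T_5=0,1,3,6,10,15\) give the signs \(+,-,-,+,+,-\);
\item the values \(T_{\lceil s/2\rceil}\) are \(1,1,3,3,6,6\).
\end{itemize}
Hence
\[
(\varepsilon_1,\dots,\varepsilon_6)=(1,-1,-3,3,6,-6)\equiv(1,-1,2,-2,1,-1)=(c_1,\dots,c_6)\pmod5.
\]
Replacing \(\varepsilon_s\) by \(c_s\) therefore changes \(\mathcal W_5(N)\) only by a multiple of \(5\), since all \(\operatorname{pod}\) values are integers.

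Next, I would check the exponent condition. With \(\kappa=4j+2\ge2\) we have
\[
2^{\kappa+1}=2^{4j+3}=8\cdot16^j\equiv 3\cdot 1^j=3\pmod5.
\]
So \(p=5\ge5\) and \(\kappa\) satisfy the hypotheses of Corollary~\ref{cor:general-pod-dyadic}. That corollary gives
\[
\sum_{s=1}^{6}\varepsilon_s\sum_{\ell\in\mathbb Z}\operatorname{pod}\bigl(2^{4j+2}(4n+3)-T_{20\ell+s+1}\bigr)\equiv0\pmod5,
\]
because \(2p-4=6\) and \(4p=20\).

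Finally, all bilateral sums are finite by the convention \(\operatorname{pod}(N)=0\) for \(N<0\), so the double sum may be reordered freely. Substituting \(c_s\) for \(\varepsilon_s\) then yields the stated congruence. There is no genuine obstacle; the only place needing care is the sign and value bookkeeping in the \(\varepsilon_s\) computation.
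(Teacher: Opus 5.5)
Your proposal is correct and matches the paper's proof: the paper also checks that $2^{4j+3}=2^3(2^4)^j\equiv3\pmod5$, so that $\kappa=4j+2$ meets the hypothesis of Corollary~\ref{cor:general-pod-dyadic} with $p=5$, and it uses the same reduction $(\varepsilon_1,\ldots,\varepsilon_6)\equiv(1,-1,2,-2,1,-1)\pmod5$, stated just before the theorem. Your explicit check of the signs and values of the $\varepsilon_s$ is a self-contained verification of that reduction.
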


\begin{proof}
	Since
	\[
	2^{4j+3}
	=
	2^3(2^4)^j
	\equiv
	3
	\pmod5,
	\]
	the integer
	$\kappa=4j+2$
	satisfies
	$2^{\kappa+1}\equiv3\pmod5$.
	The result therefore follows from
	Corollary~\ref{cor:general-pod-dyadic}, applied with \(p=5\).
\end{proof}

\begin{remark}
	The case \(j=0\) recovers the weighted congruence associated with the
	progression \(16n+12\).
\end{remark}

\subsection{The case \(p=13\)}

Applying Lemma~\ref{lem:general-reduction} with \(p=13\), we have
\(2p-4=22\). Reducing
\[
\varepsilon_s
=
(-1)^{T_{s-1}}T_{\lceil s/2\rceil},
\qquad 1\le s\le22,
\]
modulo \(13\), gives
\[
\begin{aligned}
(\varepsilon_1,\ldots,\varepsilon_{22})
\equiv{}&
(1,-1,-3,3,6,-6,3,-3,2,-2,5,-5,\\
&\hspace{10mm}
2,-2,3,-3,6,-6,-3,3,1,-1)
\pmod{13}.
\end{aligned}
\]
Thus, writing
\[
\begin{aligned}
(d_1,\ldots,d_{22})
={}&
(1,-1,-3,3,6,-6,3,-3,2,-2,5,-5,\\
&\hspace{20mm}
2,-2,3,-3,6,-6,-3,3,1,-1).
\end{aligned}
\]
we obtain
\[
S(q)
\equiv
\sum_{s=1}^{22}
d_s
\sum_{k\in\mathbb Z}
q^{T_{52k+s+1}}
\pmod{13}.
\]

Since
$\operatorname{ord}_{13}(2)=12$
and
$2^4\equiv3\pmod{13}$,
the congruence
$2^{\kappa+1}\equiv3\pmod{13}$
holds precisely when
$\kappa\equiv3\pmod{12}$.
Thus Corollary~\ref{cor:general-pod-dyadic} gives the following
infinite family.

\begin{theorem}\label{thm:pod13}
	For all integers \(j,n\ge0\), we have
	\[
	\sum_{s=1}^{22}
	\sum_{\ell\in\mathbb Z}
	d_s
	\operatorname{pod}
	\!\left(
	2^{12j+3}(4n+3)-T_{52\ell+s+1}
	\right)
	\equiv0
	\pmod{13},
	\]
	where
	\[
	\begin{aligned}
	(d_1,\ldots,d_{22})
	={}&
	(1,-1,-3,3,6,-6,3,-3,2,-2,5,-5,\\
	&\hspace{20mm}
	2,-2,3,-3,6,-6,-3,3,1,-1).
	\end{aligned}
	\]
\end{theorem}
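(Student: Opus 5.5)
The plan mirrors the proof of Theorem~\ref{thm:pod5}: reduce to Corollary~\ref{cor:general-pod-dyadic} with \(p=13\), after checking two things.

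\textbf{Step 1: the exponent condition.} Put \(\kappa=12j+3\). Then \(\kappa\ge3\ge2\) for every \(j\ge0\). Since \(\operatorname{ord}_{13}(2)=12\) and \(2^4=16\equiv3\pmod{13}\), we get
\[
2^{\kappa+1}=2^{4}\bigl(2^{12}\bigr)^j\equiv3\pmod{13},
\]
as required by Corollary~\ref{cor:general-pod-dyadic}. This gives
\[
\sum_{s=1}^{22}\varepsilon_s\sum_{\ell\in\mathbb Z}\operatorname{pod}\!\left(2^{12j+3}(4n+3)-T_{52\ell+s+1}\right)\equiv0\pmod{13},
\]
because \(2p-4=22\) and \(4p=52\) when \(p=13\).

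\textbf{Step 2: the coefficients.} We must show \(\varepsilon_s\equiv d_s\pmod{13}\) for \(1\le s\le22\), so that \(\varepsilon_s\) can be replaced by \(d_s\).
\begin{itemize}
\item \emph{Magnitudes.} The values \(T_{\lceil s/2\rceil}\) for \(\lceil s/2\rceil=1,\dots,11\) are \(1,3,6,10,15,21,28,36,45,55,66\). Modulo \(13\) these are \(1,3,6,10,2,8,2,10,6,3,1\).
\item \emph{Signs.} The sign \((-1)^{T_{s-1}}\) has period \(4\) in \(s\), giving the pattern \(+,-,-,+\).
\item \emph{Combining.} For example, \(s=3\) gives \(-3\); \(s=7\) gives \(\varepsilon_7=-10\equiv3\); \(s=9\) gives \(+2\). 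Each entry should be matched against the listed \(d_s\).
\end{itemize}
This check is finite and mechanical. The only real obstacle is correctly handling representatives such as \(10\equiv-3\) and \(8\equiv-5\) while doing it.

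Finally, summing over \(\ell\) with the reduced coefficients \(d_s\) in place of \(\varepsilon_s\) yields the stated congruence.
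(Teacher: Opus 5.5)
Your proposal is correct and follows the paper's proof. You check $2^{12j+4}\equiv3\pmod{13}$ using $2^{12}\equiv1$ and $2^4\equiv3$, invoke Corollary~\ref{cor:general-pod-dyadic} with $p=13$, and replace each $\varepsilon_s$ by $d_s$ via the coefficient reduction that the paper records just before the theorem. Your residues $T_1,\dots,T_{11}\equiv1,3,6,10,2,8,2,10,6,3,1\pmod{13}$, combined with the sign pattern $+,-,-,+$, reproduce the listed $d_s$ exactly, so the mechanical check you defer does go through.
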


\begin{proof}
	Since
	$2^{12}\equiv1\pmod{13}$
	and
	$2^4\equiv3\pmod{13}$,
	we have
	$2^{12j+4}\equiv3\pmod{13}$.
	Thus
	$\kappa=12j+3$
	satisfies
	$2^{\kappa+1}\equiv3\pmod{13}$.
	The result follows from
	Corollary~\ref{cor:general-pod-dyadic}, applied with \(p=13\).
\end{proof}

\begin{remark}
	The case \(j=0\) recovers the weighted congruence associated with the
	progression \(32n+24\).
\end{remark}

\section{Concluding remarks}
\label{S7}

The first main result of this paper is the signed decomposition
underlying the relation
\[
a(2^km)
=
a(m)+(2^{k-1}-1)\sigma(m),
\]
where \(m\) is odd and \(k\ge1\). The representations in which the two
part sizes have different \(2\)-adic valuations are in a
weight-preserving bijection with those counted by \(a(m)\), whereas
the representations with equal \(2\)-adic valuations have total signed
weight
$(2^{k-1}-1)\sigma(m)$.
Thus the decomposition provides a structural explanation of an
identity that can also be derived algebraically from the explicit
arithmetic formula for \(a(n)\).

The second main result is the uniform reduction modulo every odd prime
of the auxiliary series \(S(q)\) occurring in the factorization
$A(q)=P(q)S(q)$.
For every odd prime \(p\) and every \(N\ge1\), this reduction yields the
explicit transfer formula
\[
\mathcal W_p(N)
\equiv
\frac{\sigma(N)-\lambda(N)}4
-\frac{\sigma(N/2)}2
+\sigma(N/4)
\pmod p.
\]
Consequently, the corresponding theta-weighted sum involving
\(\operatorname{pod}(n)\) is determined modulo \(p\) by elementary
arithmetic functions. In particular, for \(m\) odd and
\(\kappa\ge1\),
\[
\mathcal W_p(2^\kappa m)
\equiv
\frac{
	(2^{\kappa+1}-3)\sigma(m)-\lambda(m)
}{4}
\pmod p.
\]

When
$m\equiv3\pmod4$
and
$2^{\kappa+1}\equiv3\pmod p$,
the preceding formula gives
\[
\mathcal W_p(2^\kappa m)\equiv0\pmod p.
\]
For \(p=5\) and \(p=13\), this produces the exponent families
$\kappa\equiv2\pmod4$
and
$\kappa\equiv3\pmod{12}$,
respectively. The separate reduction modulo \(3\), combined with the
two families of congruences for \(a(n)\), gives further weighted
congruences involving \(\operatorname{pod}(n)\).

More generally, for every odd prime \(p\), every odd positive integer
\(m\), and every \(\kappa\ge1\), the explicit transfer formula shows
that any arithmetic condition forcing
\[
(2^{\kappa+1}-3)\sigma(m)-\lambda(m)
\equiv0\pmod p
\]
produces a corresponding weighted congruence for
\(\operatorname{pod}(n)\). It would be natural to investigate such
vanishing conditions for other arithmetic progressions and to seek
analogous transfer principles for signed partition functions whose
generating functions admit both divisor-sum evaluations and
factorizations involving classical partition functions.





\bigskip


\end{document}